\documentclass[12pt,english]{article}
\usepackage[T1]{fontenc}
\usepackage[latin9]{inputenc}
\usepackage{geometry}
\geometry{verbose,tmargin=1in,bmargin=1in,lmargin=1in,rmargin=1in}
\usepackage{mathtools}
\usepackage{amsmath}
\usepackage{amsthm}
\usepackage{amssymb}
\usepackage{microtype}

\makeatletter
\theoremstyle{plain}
\newtheorem{thm}{\protect\theoremname}
\theoremstyle{plain}
\newtheorem{lem}[thm]{\protect\lemmaname}
\theoremstyle{plain}
\newtheorem{cor}[thm]{\protect\corollaryname}
\theoremstyle{plain}
\newtheorem{prop}[thm]{\protect\propositionname}

\usepackage{bm}

\usepackage{tikz}
\usetikzlibrary{arrows}
\usepackage{hyperref}

\usepackage{colortbl}
\usepackage{xparse}
\usepackage{caption}

\usepackage{ytableau}

\usepackage{titlesec}
\titleformat{\section}
  {\normalfont\large\bfseries}{\thesection.}{.8ex}{} 
\titleformat{\subsection}
  {\normalfont\bfseries}{\thesubsection.}{.8ex}{} 

\usetikzlibrary{shapes}


\tikzstyle{pathdefault}=[draw, line width=1, solid, color=black]
\tikzstyle{nodedefault}=[circle, inner sep=1.5, fill=black]
\tikzstyle{empty}=[]
\tikzstyle{nodeellipsis}=[circle, inner sep=0.5, fill=black]
\tikzstyle{pathcolor1}=[draw, line width=1.3, densely dashed, color=red]
\tikzstyle{pathcolor2}=[draw, line width=1.6, densely dotted, color=blue]
\tikzstyle{pathcolorlight}=[draw, line width=1, dotted, color=lightgray]

\tikzstyle{arbpathcolor0}=[line width=1, dashdotted, color=black]
\tikzstyle{arbpathcolor1}=[line width=1, densely dashed, color=red]
\tikzstyle{arbpathdefault}=[line width=1, densely dotted, color=blue]

\newcounter{id}
\newcommand{\drawlinedotswithstyle}[4]{
 \def\x{{#3}}
 \def\y{{#4}}
 \tikzstyle{thispathstyle}=[#1]
 \tikzstyle{thisnodestyle}=[#2]
 \setcounter{id}{-1} 
 \foreach \j in {#3}{\stepcounter{id}} 
 \foreach \i in {1,...,\the\value{id}}{  
  \path[thispathstyle] (\x[\i],\y[\i]) --(\x[\i-1],\y[\i-1]); 
 }
 \foreach \i in {1,...,\the\value{id}}{  
  \node[thisnodestyle] at (\x[\i],\y[\i]) {}; 
 }
 \node[thisnodestyle] at (\x[0],\y[0]) {}; 
}

\DeclareDocumentCommand{\drawlinedots}{ O{pathdefault} O{nodedefault} m m}{\drawlinedotswithstyle{#1}{#2}{#3}{#4}}

\let\originalleft\left
\let\originalright\right
\renewcommand{\left}{\mathopen{}\mathclose\bgroup\originalleft}
\renewcommand{\right}{\aftergroup\egroup\originalright}

\definecolor{mhcblue}{HTML}{0077CC} 
\definecolor{davidsonred}{HTML}{AC1A2F} 

\definecolor{green}{RGB}{0, 180, 0}
\definecolor{yellow}{RGB}{180, 180, 0}

\setlength{\abovecaptionskip}{0pt}
\setlength{\belowcaptionskip}{5pt}

\makeatother

\usepackage{babel}
\providecommand{\corollaryname}{Corollary}
\providecommand{\lemmaname}{Lemma}
\providecommand{\theoremname}{Theorem}
\providecommand{\propositionname}{Proposition}

\newcounter{i}

\newcommand{\drawpermutation}[3][1]{\begin{tikzpicture}[scale=0.5,baseline=(O.base)]
\setcounter{i}{0}
\foreach \j in {#2} {
\stepcounter{i}
\draw (0.5*#1,\value{i}*#1) -- (#3*#1+0.5*#1,\value{i}*#1);
\draw (\value{i}*#1,0.5*#1) -- (\value{i}*#1,#3*#1+0.5*#1);
\node at (\value{i}*#1, 0) {\footnotesize$\j$};
\draw[fill] (\value{i}*#1, \j*#1) circle (0.2);
}
\node (O) at (#3*0.5*#1,#3*0.5*#1) {};
\end{tikzpicture}}

\renewcommand{\S}{\mathfrak{S}}

\begin{document}
\global\long\def\des{\operatorname{des}}%
\global\long\def\pk{\operatorname{pk}}%
\global\long\def\lpk{\operatorname{lpk}}%
\global\long\def\std{\operatorname{std}}%
\global\long\def\Des{\operatorname{Des}}%
\global\long\def\ipk{\operatorname{ipk}}%
\global\long\def\ilpk{\operatorname{ilpk}}%
\global\long\def\rpk{\operatorname{rpk}}%

\title{Fibonacci numbers, consecutive patterns, and inverse peaks}
\author{Justin M.\ Troyka and Yan Zhuang\\
Department of Mathematics and Computer Science\\
Davidson College\texttt{}~\\
\texttt{\{jutroyka, yazhuang\}@davidson.edu}}
\maketitle
\begin{abstract}
We give multiple proofs of two formulas concerning the enumeration
of permutations avoiding a monotone consecutive pattern with a certain
value for the inverse peak number or inverse left peak number statistic.
The enumeration in both cases is given by a sequence related to Fibonacci
numbers. We also show that there is exactly one permutation whose
inverse peak number is zero among all permutations with any fixed
descent composition, and we give a few elementary consequences of
this fact. Our proofs involve generating functions, symmetric functions, regular expressions, and monomino-domino tilings.
\end{abstract}
\textbf{\small{}Keywords: }{\small{}permutations, peaks, consecutive
patterns, Fibonacci numbers}{\let\thefootnote\relax\footnotetext{2020 \textit{Mathematics Subject Classification}. Primary 05A05; Secondary 05A15, 05A19, 05E05, 11B39, 68Q45}}\bigskip{}

\section{Introduction}

Let $\mathfrak{S}_{n}$ denote the symmetric group of permutations
on the set $[n]\coloneqq\{1,2,\dots,n\}$. 
We write permutations in one-line notation\textemdash that is, $\pi=\pi_{1}\pi_{2}\cdots\pi_{n}$\textemdash and
the $\pi_{i}$ are called \textit{letters} of $\pi$. The \textit{length}
of $\pi$ is the number of letters in $\pi$, so that $\pi$ has length
$n$ whenever $\pi\in\mathfrak{S}_{n}$. 

For a sequence of $n$
distinct integers $w$, the \textit{standardization} of $w$\textemdash denoted
$\std(w)$\textemdash is defined to be the permutation in $\mathfrak{S}_{n}$
obtained by replacing the smallest entry of $w$ with 1, the second
smallest with 2, and so on. As an example, we have $\std(83614)=52413$.
Given permutations $\pi\in\mathfrak{S}_{n}$ and $\sigma\in\mathfrak{S}_{m}$,
we say that $\pi$ \textit{contains} $\sigma$ (as a \textit{consecutive
pattern}) if $\std(\pi_{i}\pi_{i+1}\cdots\pi_{i+m-1})=\sigma$ for
some $i\in[n-m+1]$. If $\pi$ does not contain $\sigma$, then we
say that $\pi$ \textit{avoids} $\sigma$ (as a consecutive pattern).
Let $\mathfrak{S}_{n}(\sigma)$ denote the subset of permutations
in $\mathfrak{S}_{n}$ which avoid $\sigma$.

This paper supplements the recent paper \cite{Zhuang2021}, in which
the second author proves a lifting of the Goulden\textendash Jackson
cluster method for permutations\textemdash a standard tool in the
study of consecutive patterns\textemdash to the Malvenuto\textendash Reutenauer
algebra. By applying various homomorphisms to this generalized cluster
method, the second author obtains in \cite{Zhuang2021} various specializations which
allows one to count permutations avoiding prescribed consecutive patterns
while keeping track of certain ``inverse'' permutation statistics.
These statistics include the inverse descent number, the inverse peak
number, and the inverse left peak number.

In this paper, we prove two observations which were left unproven
in \cite{Zhuang2021}; these are stated in Theorems \ref{t-ipk} and
\ref{t-ilpk} below. Both theorems concern the enumeration of permutations
avoiding a monotone pattern with a certain value for the inverse peak
number or inverse left peak number statistic, and the enumeration
in both cases is given by a sequence related to Fibonacci numbers.

Let us establish a few more definitions. For a permutation $\pi$
in $\mathfrak{S}_{n}$, we say that $i\in\{2,3,\dots,n-1\}$ is a
\textit{peak} of $\pi$ if $\pi_{i-1}<\pi_{i}>\pi_{i+1}$, and we
say that $i\in[n-1]$ is a \textit{left peak} of $\pi$ if either
$i$ is a peak of $\pi$ or if $i=1$ and $\pi_{1}>\pi_{2}$. Let
$\ipk(\pi)$ be the number of peaks of $\pi^{-1}$, and let $\ilpk(\pi)$
be the number of left peaks of $\pi^{-1}$. For example, if $\pi=23568714$,
then $\pi^{-1}=71283465$ and we have $\ipk(\pi)=2$ and $\ilpk(\pi)=3$.

The \textit{Fibonacci sequence of order $k$} (also called the \textit{$k$-generalized
Fibonacci sequence}), denoted $\{f_{n}^{(k)}\}_{n\geq0}$, is defined
by the recursion 
\[
f_{n}^{(k)}=f_{n-1}^{(k)}+f_{n-2}^{(k)}+\cdots+f_{n-k}^{(k)}
\]
with $f_{0}^{(k)}\coloneqq1$ (and where we treat $f_{n}^{(k)}$ as
0 for $n<0$). Hence, the Fibonacci sequence of order two is the usual
Fibonacci sequence \cite[A000045]{oeis} and the Fibonacci sequence
of order three is the \textit{tribonacci sequence} \cite[A000073]{oeis}.\footnote{Note that the OEIS uses a different indexing for these sequences.}
The number $f_{n}^{(k)}$ counts tilings of a $1\times n$ rectangle
with tiles of size $1\times j$ where $j$ ranges from 1 to $k$ \cite[Section 3.4]{Benjamin2003};
equivalently, $f_{n}^{(k)}$ is the number of compositions of $n$
with no parts greater than $k$. Several other combinatorial interpretations
for $f_{n}^{(k)}$ are known; see the references in \cite[A092921]{oeis}.
Theorem \ref{t-ipk} gives another combinatorial interpretation for
$f_{n}^{(k)}$.
\begin{thm}[{\cite[Claim 4.6]{Zhuang2021}}]
\label{t-ipk}Let $n\geq1$ and $m\geq3$. The number of permutations
$\pi$ in $\mathfrak{S}_{n}(12\cdots m)$ with $\ipk(\pi)=0$\textemdash that
is, such that $\pi^{-1}$ has no peaks\textemdash is equal to $f_{n}^{(m-1)}$.
\end{thm}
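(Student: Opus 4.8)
The plan is to translate both hypotheses on $\pi$ into conditions on a single binary word of length $n-1$, and then count those words by a bijection with compositions of $n$. The whole argument is elementary and bijective, with the real content hidden in one structural observation about ascents.

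First I would analyze the condition $\ipk(\pi)=0$. A permutation $\tau$ has a peak at $i$ exactly when $i\in\Des(\tau)$ while $i-1\notin\Des(\tau)$ (for $2\le i\le n-1$), so $\tau$ has no peaks if and only if $\Des(\tau)$ is ``downward closed,'' which forces $\Des(\tau)=\{1,2,\dots,k\}$ for some $0\le k\le n-1$. Applying this to $\tau=\pi^{-1}$ and using the standard fact that $i\in\Des(\pi^{-1})$ if and only if the value $i+1$ occurs before the value $i$ in $\pi$, I would deduce that $\ipk(\pi)=0$ holds precisely when, setting $c=k+1$, the values $1,2,\dots,c$ appear in decreasing order in $\pi$ and the values $c,c+1,\dots,n$ appear in increasing order; by transitivity $c$ precedes every other value, so $\pi_1=c$. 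In other words, $\pi$ is a shuffle of a decreasing run $c,c-1,\dots,1$ and an increasing run $c,c+1,\dots,n$ sharing their first letter $c$.

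The key step is to observe that, for such a $\pi$, every letter in positions $2,\dots,n$ is either \emph{high} (value $>c$) or \emph{low} (value $\le c$), and a short case check on the four possibilities shows that $\pi_i<\pi_{i+1}$ if and only if $\pi_{i+1}$ is high. Recording the high/low pattern of positions $2,\dots,n$ yields a binary word of length $n-1$, and this assignment is a bijection between the permutations $\pi$ with $\ipk(\pi)=0$ and all such words: the word determines $c$ as $n$ minus the number of highs, and then high positions receive $c+1,\dots,n$ increasingly and low positions receive $c,c-1,\dots,1$ decreasingly, reconstructing $\pi$. Under this bijection an increasing run of length $m$ in $\pi$ corresponds exactly to $m-1$ consecutive high letters, so $\pi\in\mathfrak{S}_n(12\cdots m)$ if and only if its word has no run of $m-1$ consecutive highs. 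The main obstacle is precisely this translation: getting the off-by-one bookkeeping right (a length-$m$ increasing run versus $m-1$ consecutive highs) and correctly handling the first letter $c$, since the ascent/descent pattern of $\pi$ must be shown to depend only on the high/low labels and nothing else.

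Finally I would count binary words of length $n-1$ with no run of $m-1$ consecutive highs. Writing such a word as $I^{e_0}DI^{e_1}D\cdots DI^{e_{t-1}}$ (highs $=I$, lows $=D$) with each $e_j\le m-2$, the map sending it to the composition $(e_0+1,e_1+1,\dots,e_{t-1}+1)$ is a bijection onto the compositions of $n$ whose parts are all at most $m-1$, since the parts sum to $\sum_j e_j+t=n$ and the bound $e_j\le m-2$ is equivalent to every part being at most $m-1$. By the interpretation of $f_n^{(m-1)}$ recalled before the theorem statement, the number of such compositions is $f_n^{(m-1)}$, which completes the proof.
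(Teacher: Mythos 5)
Your proof is correct and is essentially the paper's bijective route to this theorem: your shuffle characterization of the permutations with $\ipk(\pi)=0$ (a decreasing sequence $c,c-1,\dots,1$ headed by $\pi_1=c$ shuffled with the increasing sequence $c+1,\dots,n$) is exactly the one in the paper's second proof of Theorem \ref{t-ipkdescomp}, and your high/low binary word is precisely the ascent--descent word of $\pi$, so your word-to-composition bijection recovers the descent composition and the paper's reduction to compositions of $n$ with parts at most $m-1$. The only cosmetic difference is that you encode the ``one permutation per descent composition'' statement via binary words instead of isolating it as a standalone theorem.
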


The next theorem concerns OEIS sequence A080145 \cite[A080145]{oeis}.
As far as we know, this is the first combinatorial interpretation
related to permutation enumeration for the numbers in \cite[A080145]{oeis}.
Here, $f_{n}\coloneqq f_{n}^{(2)}$ denotes the $n$th Fibonacci number.
\begin{thm}[{\cite[Claim 4.9]{Zhuang2021}}]
\label{t-ilpk} Let $n\geq1$. The number of permutations $\pi$
in $\mathfrak{S}_{n}(321)$ with $\ilpk(\pi)=1$\textemdash that is,
such that $\pi^{-1}$ has exactly one left peak\textemdash is equal
to 
\[
\sum_{i=1}^{n-1}\sum_{k=1}^{i}f_{k-1}f_{k}=f_{n-1}f_{n}-\left\lfloor \frac{n+1}{2}\right\rfloor .
\]
\end{thm}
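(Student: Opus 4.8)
The plan is to translate the two-sided condition defining these permutations into something I can enumerate, carry out the count with the generating-function machinery already developed in \cite{Zhuang2021}, and then reduce the resulting double sum to the stated closed form using standard Fibonacci identities. First I would pass to $w\coloneqq\pi^{-1}$. Since $\lpk(w)$ equals the number of maximal blocks of consecutive integers in $\Des(w)$, the requirement $\ilpk(\pi)=\lpk(w)=1$ is exactly that $\Des(w)$ be a single nonempty interval $\{a,a+1,\dots,b\}$; equivalently $w$ increases on $[1,a]$, decreases on $[a,b+1]$, and increases again on $[b+1,n]$. At the same time $\pi$ avoids the consecutive pattern $321$ precisely when $\Des(\pi)$ has no two consecutive integers, and because $\Des(\pi)=\Des(w^{-1})$, this becomes the condition that the inverse descent set $\Des(w^{-1})$ be sparse in the same sense. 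Thus the theorem is equivalent to counting $w\in\mathfrak{S}_n$ whose descent set is a single nonempty interval and whose inverse descent set contains no two consecutive integers.

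The heart of the argument is to enumerate these $w$. My primary route would be to apply the homomorphism tracking the inverse left peak statistic to the noncommutative generating function produced by the generalized cluster method for $12\cdots m$, specialize to $m=3$, and extract the coefficient of $t^{1}$ from $\sum_{\pi}t^{\ilpk(\pi)}$. This should yield a rational ordinary generating function in $x$, and I would match its coefficients against $\sum_{i=1}^{n-1}\sum_{k=1}^{i}f_{k-1}f_{k}$ (equivalently, show the count obeys the recurrence solved by this double sum). A more hands-on alternative, suggested by the product $f_{k-1}f_k$ and by Theorem \ref{t-ipk} with $m=3$ (which identifies $f_j$ with the number of $123$-avoiders of size $j$ whose inverse has no peaks), would be to seek a near-bijection between the reformulated objects and pairs consisting of one such object of size $n-1$ and one of size $n$, explaining the leading term $f_{n-1}f_{n}$, with the correction $\lfloor(n+1)/2\rfloor$ accounting for the degenerate pairs. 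I expect this step to be the main obstacle: it requires collapsing a genuinely two-sided constraint — a condition on $\Des(w)$ together with one on $\Des(w^{-1})$ — into a product of one-sided Fibonacci counts, whereas the joint distribution of $\Des(w)$ and $\Des(w^{-1})$ is in general governed by intractable RSK/Kostka data and simplifies only because of the special interval and sparsity shapes at hand.

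Finally, granting the double-sum count, the stated closed form is routine Fibonacci bookkeeping. Writing $S_i\coloneqq\sum_{k=1}^{i}f_{k-1}f_{k}$, I would use the classical identity $\sum_{i=0}^{m}f_i^{2}=f_{m}f_{m+1}$ (an immediate induction from $f_{m}f_{m+1}=f_m^{2}+f_{m-1}f_m$) together with $\sum_{k=1}^{i}f_{k-1}f_k=f_i^{2}-[\,i\text{ even}\,]$, giving $\sum_{i=1}^{n-1}S_i=\sum_{i=1}^{n-1}f_i^{2}-\lfloor(n-1)/2\rfloor=\bigl(f_{n-1}f_{n}-f_0^{2}\bigr)-\lfloor(n-1)/2\rfloor=f_{n-1}f_{n}-\lfloor(n+1)/2\rfloor$, which is the desired equality; this can equally well be confirmed by a single induction on $n$.
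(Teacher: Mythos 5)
Your proposal is correct in outline, and your primary route is essentially the paper's first proof: the paper likewise starts from the specialization of the generalized cluster method in \cite{Zhuang2021} (its Theorem \ref{t-ilpkgf}, which is stated for the pattern $m\cdots21$ --- note your slip in writing ``$12\cdots m$''; for $\ilpk$ the relevant pattern is $321=m\cdots21$ with $m=3$) and extracts the linear coefficient in $t$, arriving at the rational generating function $x^{2}/\bigl((1-x)^{2}(1-2x-2x^{2}+x^{3})\bigr)$ and matching it against the known generating function of the sequence. Be aware that the step you describe as ``this should yield a rational ordinary generating function'' is where most of the work lives: the cluster-method formula delivers $P_{321,n}^{\ilpk}\bigl(4t/(1+t)^{2}\bigr)$ weighted by $(1+t)^{n}/(1-t)^{n+1}$, so one must invert the substitution via $v=2t^{-1}(1-\sqrt{1-t})-1$, expand $v$ and the weights as power series in $t$, and truncate before comparing linear coefficients --- routine but not automatic, and your sketch does not acknowledge this inversion. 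Your reformulation via $w=\pi^{-1}$ ($\Des(w)$ a single nonempty interval, $\Des(w^{-1})$ with no two consecutive elements) is exactly the setup $N_{n}'$ of the paper's second, bijective proof; but where you speculate about a near-bijection onto pairs of Fibonacci objects of sizes $n-1$ and $n$ minus degenerate cases --- and rightly flag this as the main obstacle --- the paper instead encodes such permutations as words over $\{a,b,c\}$ avoiding the subwords $bba$, $bbb$, $cba$, $cbb$ (Proposition \ref{lem:NW}), characterizes these words by an unambiguous regular expression (Proposition \ref{lem:regexmain}), and partitions them so that each block is in bijection with $2\times k$ monomino--domino tilings counted by $f_{k-1}f_{k}$ (Proposition \ref{lem:Zfib}), producing the double sum $\sum_{i=1}^{n-1}\sum_{k=1}^{i}f_{k-1}f_{k}$ directly rather than the closed form; so the product $f_{k-1}f_{k}$ arises locally, per summand, not globally as in your heuristic. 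Finally, your verification that
\[
\sum_{i=1}^{n-1}\sum_{k=1}^{i}f_{k-1}f_{k}=f_{n-1}f_{n}-\left\lfloor \frac{n+1}{2}\right\rfloor ,
\]
via $\sum_{k=1}^{i}f_{k-1}f_{k}=f_{i}^{2}-[\,i\text{ even}\,]$ and $\sum_{i=0}^{m}f_{i}^{2}=f_{m}f_{m+1}$ (both valid under the paper's indexing $f_{0}=f_{1}=1$), is correct and is a detail the paper states without proof --- a small but genuine addition on your part.
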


This paper is organized as follows. In Section 2, we prove both theorems
using generating functions derived in \cite{Zhuang2021}. 
Sections 3 and 4 provide alternative proofs for these theorems which we believe are more enlightening.
In Section 3, we show that there is exactly one permutation $\pi$ with $\ipk(\pi)=0$
among all permutations with any fixed ``descent composition'' (Theorem
\ref{t-ipkdescomp}), and Theorem \ref{t-ipk} follows as an immediate
corollary of this result. We give two proofs of Theorem \ref{t-ipkdescomp}.
Our first proof recovers Theorem \ref{t-ipkdescomp} from two classical
results of H.\ O.\ Foulkes from \cite{Foulkes1976}, a foundational
paper relating symmetric function theory and representation theory
to permutation enumeration. Our second proof is a direct bijective
proof, which is in a sense equivalent to the symmetric function proof
but does not rely on the machinery of symmetric functions. We end
Section 3 with a few other elementary consequences of Theorem \ref{t-ipkdescomp}.
Finally, in Section 4, we give a bijective proof of Theorem \ref{t-ilpk}
using regular expressions and tilings.

\section{Generating function proofs for Theorems \ref{t-ipk} and \ref{t-ilpk}}

Given a consecutive pattern $\sigma$, let $P_{\sigma,n}^{\ipk}(t)$
be defined by 
\[
P_{\sigma,n}^{\ipk}(t)\coloneqq\sum_{\pi\in\mathfrak{S}_{n}(\sigma)}t^{\ipk(\pi)+1}
\]
for $n\geq1$ and $P_{\sigma,0}^{\ipk}(t)\coloneqq1$. (It may seem more natural to define these polynomials without an extra factor of $t$, but the definition given above leads to nicer formulas.) The theorem
below, derived in \cite{Zhuang2021}, gives a generating function
formula for the polynomials $P_{12\cdots m,n}^{\ipk}(t)$.
\begin{thm}[{\cite[Theorem 4.5 (c)]{Zhuang2021}}]
\label{t-ipkgf}Let $m\geq2$. We have 
\begin{multline*}
\qquad\frac{1}{1-t}+\frac{1}{2}\sum_{n=1}^{\infty}\left(\frac{1+t}{1-t}\right)^{n+1}P_{12\cdots m,n}^{\ipk}\left(\frac{4t}{(1+t)^{2}}\right)x^{n}\\
=1+\sum_{k=1}^{\infty}\left[1-2kx+\sum_{j=1}^{\infty}(c_{m,j,k}x^{jm}-c_{m,j,k}^{\prime}x^{jm+1})\right]^{-1}t^{k}\qquad
\end{multline*}
where 
\[
c_{m,j,k}={\displaystyle 2\sum_{l=1}^{k}{l+jm-1 \choose l-1}{jm-1 \choose k-l}}\quad\text{and}\quad c_{m,j,k}^{\prime}=2\sum_{l=1}^{k}{l+jm \choose l-1}{jm \choose k-l}.
\]
\end{thm}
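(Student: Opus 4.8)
The plan is to obtain the identity as a specialization of the Malvenuto--Reutenauer (MR) lift of the Goulden--Jackson cluster method, the central tool of \cite{Zhuang2021}. The starting point is the master identity furnished by that cluster method: working in the MR algebra $\P$, one has a closed form
\[
\sum_{n\geq 0}\Bigl(\sum_{\pi\in\S_n(12\cdots m)}\mathbf{F}_\pi\Bigr)x^n=\bigl(1-\mathbf{F}_1 x-\mathcal{C}_m(x)\bigr)^{-1},
\]
where $\mathbf{F}_\pi$ is the fundamental basis element of $\P$ attached to $\pi$ (carrying the full inverse-descent data of $\pi$) and $\mathcal{C}_m(x)$ is the \emph{cluster generating function} of the monotone pattern $12\cdots m$, assembled with the usual inclusion--exclusion signs. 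Since $\ipk(\pi)$ is determined by the inverse descent set of $\pi$, there is a ring homomorphism $\Phi$ out of (the inverse-descent quotient of) $\P$ into a power-series ring in a variable $t$ whose output encodes the inverse-peak distribution of the avoiders. Applying $\Phi$ to the master identity and using multiplicativity converts the right-hand side into $\bigl(1-\Phi(\mathbf{F}_1 x)-\Phi(\mathcal{C}_m(x))\bigr)^{-1}$, which is already the shape of the claimed answer.

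The work then splits into two computations. First I would determine $\mathcal{C}_m(x)$ explicitly: for the monotone pattern, occurrences of $12\cdots m$ are increasing runs of length $m$, overlapping occurrences glue into longer increasing runs, and the clusters are indexed by their length. This is the one place where the combinatorics of $12\cdots m$ (as opposed to a general pattern) enters, and the structure is classical. Second, and this is the crux, I would evaluate $\Phi$ on each homogeneous piece. Here the inverse-peak homomorphism acts on the ribbon/monotone elements coming from clusters of a given length, and tracking how the peaks of the inverse distribute across a cluster built from $j$ overlapping copies produces the coefficient of $t^k$ as a sum over an intermediate index $l$ of products of two binomial coefficients. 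Matching the bookkeeping shows these are precisely $c_{m,j,k}$ for the degree-$jm$ terms and $c_{m,j,k}^{\prime}$ for the degree-$(jm+1)$ terms, the opposite signs reflecting the alternating signs of the cluster expansion; the single-letter term $\Phi(\mathbf{F}_1 x)$ likewise contributes $2kx$ to the coefficient of $t^k$.

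Finally, I would account for the prefactors on the left-hand side. The left side is the image of $\Phi$ re-expressed through the polynomials $P_{12\cdots m,n}^{\ipk}$, and the dictionary between the peak variable appearing as the argument of $P_{12\cdots m,n}^{\ipk}$ and the descent-like variable $t$ produced by $\Phi$ is the standard peak-to-descent correspondence. This correspondence is exactly what accounts for the substitution $t\mapsto 4t/(1+t)^2$ together with the factors $\bigl((1+t)/(1-t)\bigr)^{n+1}$, while the additive $1/(1-t)$ term and the leading $1$ on the right absorb the $n=0$ and boundary contributions. Assembling these normalizations and comparing the coefficients of $t^k$ on both sides yields the stated equality.

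The step I expect to be the main obstacle is the explicit evaluation of $\Phi$ on the cluster generating function, namely proving that the inverse-peak specialization of the monotone clusters collapses to the double binomial sums defining $c_{m,j,k}$ and $c_{m,j,k}^{\prime}$. This requires a careful, sign-sensitive analysis of how the peaks of $\pi^{-1}$ behave along overlapping increasing runs, and it is where the otherwise unmotivated binomial coefficients acquire their meaning.
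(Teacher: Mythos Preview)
The paper does not prove this theorem at all: it is quoted verbatim from \cite[Theorem 4.5 (c)]{Zhuang2021} and used as a black box to derive Theorem~\ref{t-ipk}. There is therefore no proof in the present paper to compare your proposal against.

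That said, your outline is a reasonable description of the strategy actually carried out in \cite{Zhuang2021}: the generalized cluster method is lifted to the Malvenuto--Reutenauer algebra, a homomorphism keyed to the inverse-peak statistic is applied, and the monotone clusters are evaluated explicitly. Your sketch correctly identifies the substitution $t\mapsto 4t/(1+t)^2$ and the prefactors $\bigl((1+t)/(1-t)\bigr)^{n+1}$ as arising from the peak-to-descent change of variables, and correctly anticipates that the hardest step is the explicit evaluation of the homomorphism on the cluster series to produce the binomial sums $c_{m,j,k}$ and $c_{m,j,k}^{\prime}$. However, as written your proposal remains a plan rather than a proof: the key computation is only named, not performed, and you explicitly flag it as an obstacle. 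To turn this into an actual proof you would need to carry out that evaluation in full, which is precisely the content of \cite{Zhuang2021}.
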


We shall use Theorem \ref{t-ipkgf} to prove Theorem \ref{t-ipk}.
Given two formal power series $f$ and $g$ in the variable $t$,
let us write $f\sim g$ if they have the same linear coefficients,
i.e., $[t]\,f=[t]\,g$.
\begin{proof}[Proof of Theorem \ref{t-ipk}]
Taking Theorem \ref{t-ipkgf}, replacing $x$ with $(1-t)x/(1+t)$,
and performing a few algebraic manipulations gives
\begin{multline*}
\qquad\sum_{n=1}^{\infty}P_{12\cdots m,n}^{\ipk}\left(\frac{4t}{(1+t)^{2}}\right)x^{n}=\frac{2}{1+t}\left[(1-t)\left[1+\sum_{k=1}^{\infty}\left[1-\frac{2kx(1-t)}{1+t}\vphantom{+\sum_{j=1}^{\infty}\left(c_{m,j,k}\left(\frac{x(1-t)}{1+t}\right)^{jm}-c_{m,j,k}^{\prime}\left(\frac{x(1-t)}{1+t}\right)^{jm+1}\right)}\right.^{\vphantom{-1}}\right.\right.\\
\left.\left.\left.+\sum_{j=1}^{\infty}\left(c_{m,j,k}\left(\frac{x(1-t)}{1+t}\right)^{jm}-c_{m,j,k}^{\prime}\left(\frac{x(1-t)}{1+t}\right)^{jm+1}\right)\right]^{-1}t^{k}\right]-1\right].\qquad
\end{multline*}
Let us replace the variable $t$ with $v$, and let $t=4v/(1+v)^{2}$.
It can be readily verified that solving $t=4v/(1+v)^{2}$ yields $v=2t^{-1}(1-\sqrt{1-t})-1$.
Therefore, we have
\begin{multline*}
\qquad\sum_{n=1}^{\infty}P_{12\cdots m,n}^{\ipk}(t)x^{n}=\frac{2}{1+v}\left[(1-v)\left[1+\sum_{k=1}^{\infty}\left[1-\frac{2kx(1-v)}{1+v}\vphantom{+\sum_{j=1}^{\infty}\left(c_{m,j,k}\left(\frac{x(1-v)}{1+v}\right)^{jm}-c_{m,j,k}^{\prime}\left(\frac{x(1-v)}{1+v}\right)^{jm+1}\right)}\right.^{\vphantom{-1}}\right.\right.\\
\left.\left.\left.+\sum_{j=1}^{\infty}\left(c_{m,j,k}\left(\frac{x(1-v)}{1+v}\right)^{jm}-c_{m,j,k}^{\prime}\left(\frac{x(1-v)}{1+v}\right)^{jm+1}\right)\right]^{-1}v^{k}\right]-1\right].\qquad
\end{multline*}
where $v$ is as above. In fact, $v$ is a formal power series in
$t$, and we have 
\[
v=\frac{1}{4}t+\frac{1}{8}t^{2}+\frac{5}{64}t^{3}+\cdots,
\]
\[
\frac{2}{1+v}=2-\frac{1}{2}t-\frac{1}{8}t^{2}-\frac{1}{16}t^{3}+\cdots,
\]
and
\[
\frac{1-v}{1+v}=1-\frac{1}{2}t-\frac{1}{8}t^{2}-\frac{1}{16}t^{3}+\cdots.
\]
As we are only concerned with the linear coefficients, we may truncate
these series to obtain {\allowdisplaybreaks 
\begin{alignat*}{1}
 & \sum_{n=1}^{\infty}P_{12\cdots m,n}^{\ipk}(t)x^{n}\\
 & \quad\sim\left(2-\frac{t}{2}\right)\left[\left(1-\frac{t}{4}\right)\left[1+\sum_{k=1}^{\infty}\left[1-2kx+\sum_{j=1}^{\infty}\left(c_{m,j,k}x^{jm}-c_{m,j,k}^{\prime}x^{jm+1}\right)\right]^{-1}\left(\frac{t}{4}\right)^{k}\right]-1\right]\\
 & \quad\sim-t+2\left[1-2x+\sum_{j=1}^{\infty}\left(c_{m,j,1}x^{jm}-c_{m,j,1}^{\prime}x^{jm+1}\right)\right]^{-1}\frac{t}{4}+\frac{t}{2}.
\end{alignat*}
Some additional algebraic manipulations show that 
\begin{align*}
\sum_{n=1}^{\infty}P_{12\cdots m,n}^{\ipk}(t)x^{n} & \sim-t+2\left[1-2x+\sum_{j=1}^{\infty}\left(c_{m,j,1}x^{jm}-c_{m,j,1}^{\prime}x^{jm+1}\right)\right]^{-1}\frac{t}{4}+\frac{t}{2}\\
 & =\frac{t}{2}\left[1-2x+\sum_{j=1}^{\infty}(2x^{jm}-2x^{jm+1})\right]^{-1}-\frac{t}{2}\\
 & =\frac{t}{2}\cdot\frac{1}{1-2x+2(1-x)\sum_{j=1}^{\infty}x^{jm}}-\frac{t}{2}\\
 & =\frac{t}{2}\cdot\frac{1}{1-2x+2(1-x)\left(\frac{1}{1-x^{m}}-1\right)}-\frac{t}{2}\\
 & =\frac{t(x-x^{m})}{1-2x+x^{m}}\\
 & =t\left(\frac{1-x}{1-2x+x^{m}}-1\right).
\end{align*}
Because $(1-x)/(1-2x+x^{m})$ is the ordinary generating function
for the Fibonacci numbers of order $m-1$ \cite[A048887]{oeis}, the
result follows.}
\end{proof}
Let us now turn our attention to Theorem \ref{t-ilpk}. Given a consecutive
pattern $\sigma$, define 
\[
P_{\sigma,n}^{\ilpk}(t)\coloneqq\sum_{\pi\in\mathfrak{S}_{n}(\sigma)}t^{\ilpk(\pi)}
\]
for all $n\geq0$. Then the following is an analogue of Theorem \ref{t-ipkgf}
for the polynomials $P_{m\cdots21,n}^{\ilpk}(t)$.
\begin{thm}[{\cite[Theorem 4.8 (c)]{Zhuang2021}}]
\label{t-ilpkgf}Let $m\geq2$. We have 
\begin{multline*}
\qquad\sum_{n=0}^{\infty}\frac{(1+t)^{n}}{(1-t)^{n+1}}P_{m\cdots21,n}^{\ilpk}\left(\frac{4t}{(1+t)^{2}}\right)x^{n}\\
=\frac{1}{1-x}+\sum_{k=1}^{\infty}\left[1-(2k+1)x+\sum_{j=1}^{\infty}(e_{m,j,k}x^{jm}-e_{m,j,k}^{\prime}x^{jm+1})\right]^{-1}t^{k}\qquad
\end{multline*}
where 
\[
e_{m,j,k}=4\sum_{l=1}^{k}{l+jm-1 \choose l-1}{jm-2 \choose k-l}\quad\text{and}\quad e_{m,j,k}^{\prime}=4\sum_{l=1}^{k}{l+jm \choose l-1}{jm-1 \choose k-l}.
\]
\end{thm}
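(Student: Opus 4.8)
The plan is to run the Malvenuto--Reutenauer lift of the Goulden--Jackson cluster method from \cite{Zhuang2021}, following the same route that produces Theorem~\ref{t-ipkgf} but with the peak specialization replaced by its left-peak counterpart. Working in the Malvenuto--Reutenauer algebra (equivalently, the free quasisymmetric functions), the cluster method expresses the noncommutative avoider series $\sum_{n\ge 0}\bigl(\sum_{\pi\in\S_n(m\cdots21)}\mathbf F_\pi\bigr)x^n$ as a formal reciprocal $\bigl(1-\mathbf W(x)-\mathbf C(x)\bigr)^{-1}$, where $\mathbf W(x)$ is the single-letter contribution and $\mathbf C(x)$ is the signed cluster series recording maximal overlapping chains of occurrences of the decreasing pattern $m\cdots21$. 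I would then apply the quasisymmetric homomorphism that reads off the descent composition of $\pi^{-1}$ and tracks $\ilpk(\pi)=\lpk(\pi^{-1})$; under this map the reciprocal becomes the right-hand side of the claimed identity, with the substitution $4t/(1+t)^2$ and the prefactor $(1+t)^n/(1-t)^{n+1}$ emerging as the principal specialization of the homomorphism.

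There are three ingredients. First I would record the cluster series for the monotone pattern: a cluster of $m\cdots21$ is a decreasing run covered by a connected chain of length-$m$ windows, and grouping these contributions by a length parameter $j$ (the number of overlapping blocks) produces the powers $x^{jm}$ and $x^{jm+1}$ that appear, the latter arising as the $(1-x)$-type correction seen in the $\sum_j(c_{m,j,1}x^{jm}-c_{m,j,1}'x^{jm+1})$ pattern of the $\ipk$ proof. Second I would evaluate the $\ilpk$-homomorphism on these clusters: the left-peak enumerator is governed by the left-peak analogue of Stembridge's enriched $P$-partition theory, under which the basis element attached to a descent set maps to a polynomial in $t$ depending only on the left-peak set, and the ``barred/unbarred'' doubling intrinsic to enriched $P$-partitions is exactly what supplies the numerator $(1+t)$ and the argument $4t/(1+t)^2$. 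The convolution of binomials in $e_{m,j,k}$ and $e_{m,j,k}'$ is then the Vandermonde-type sum distributing the left-peak weight across the internal descent positions of a length-$jm$ (respectively length-$(jm+1)$) cluster, with the index $l$ tracking the weight carried on one side.

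The decisive difference from the peak case, and the reason the coefficients here read $2k+1$, $\binom{jm-2}{k-l}$, and $\binom{jm-1}{k-l}$ rather than the $2k$, $\binom{jm-1}{k-l}$, and $\binom{jm}{k-l}$ of Theorem~\ref{t-ipkgf}, is the asymmetric role of the initial position: a left peak may occur at position~$1$ of $\pi^{-1}$, whereas an ordinary peak may not. Hence this theorem cannot be obtained from Theorem~\ref{t-ipkgf} by a reverse--complement symmetry (which respects peaks but not the distinguished left endpoint); instead the left end of each maximal cluster must be re-accounted under the left-peak homomorphism. I expect this boundary bookkeeping to be the main obstacle: one must verify that enriching the left endpoint shifts precisely one binomial index (sending $jm-1\mapsto jm-2$ and $jm\mapsto jm-1$), upgrades the linear coefficient from $2k$ to $2k+1$, and doubles the leading cluster weight, a change already visible at $k=1$, where the $2$'s of the $\ipk$ formula become the $3$ and the $e_{m,j,1}=4$ here. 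Once these specialization values are pinned down and one checks that no overcounting occurs where clusters or the leading factor $1/(1-x)$ meet, assembling the reciprocal and simplifying is the same routine formal-power-series manipulation as in the proof of Theorem~\ref{t-ipkgf}.
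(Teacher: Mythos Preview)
The paper does not contain a proof of this theorem at all: Theorem~\ref{t-ilpkgf} is simply quoted from \cite[Theorem~4.8~(c)]{Zhuang2021} and then used as a black box in the generating-function proof of Theorem~\ref{t-ilpk}. So there is no ``paper's own proof'' to compare your proposal against.

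Your sketch is a plausible high-level outline of how such a result is derived in \cite{Zhuang2021}, and the structural points you make (the Malvenuto--Reutenauer cluster method, the homomorphism tracking $\ilpk$, the boundary adjustment distinguishing left peaks from peaks) are the right ingredients. But as written it is a narrative of what a proof would look like rather than a proof: you do not actually carry out the homomorphism computation on the cluster series, you do not derive the specific binomial sums $e_{m,j,k}$ and $e_{m,j,k}'$, and the key claim that the left-endpoint enrichment ``shifts precisely one binomial index'' is asserted rather than verified. If the goal were to supply a self-contained proof here, you would need to execute those computations explicitly; as it stands, the proposal is a reasonable reading guide to \cite{Zhuang2021} but not an independent argument.
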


\begin{proof}[Proof of Theorem \ref{t-ilpk}]
Following the proof of Theorem \ref{t-ipk} given above, we obtain
from Theorem \ref{t-ilpkgf} the formula
\begin{multline*}
\qquad\sum_{n=0}^{\infty}P_{m\cdots21,n}^{\ilpk}(t)x^{n}=(1-v)\left[\frac{1}{1-\frac{x(1-v)}{1+v}}+\sum_{k=1}^{\infty}\left[1-\frac{(2k+1)x(1-v)}{1+v}\vphantom{+\sum_{j=1}^{\infty}\left(e_{m,j,k}\left(\frac{x(1-v)}{1+v}\right)^{jm}-e_{m,j,k}^{\prime}\left(\frac{x(1-v)}{1+v}\right)^{jm+1}\right)}\right.^{\vphantom{-1}}\right.\\
\left.\left.+\sum_{j=1}^{\infty}\left(e_{m,j,k}\left(\frac{x(1-v)}{1+v}\right)^{jm}-e_{m,j,k}^{\prime}\left(\frac{x(1-v)}{1+v}\right)^{jm+1}\right)\right]^{-1}v^{k}\right]\qquad
\end{multline*}
where again $v=2t^{-1}(1-\sqrt{1-t})-1$. Because 
\[
v=\frac{1}{4}t+\frac{1}{8}t^{2}+\frac{5}{64}t^{3}+\cdots,
\]
\[
\frac{1-v}{1+v}=1-\frac{1}{2}t-\frac{1}{8}t^{2}-\frac{1}{16}t^{3}+\cdots,
\]
and 
\[
\frac{1}{1-\frac{x(1-v)}{1+v}}=\frac{1}{1-x}-\frac{x}{2(1-x)^{2}}t+\frac{x(3x-1)}{8(1-x)^{3}}t^{2}-\frac{x(5x^{2}-4x+1)}{16(1-x)^{4}}t^{3}+\cdots,
\]
we have {\allowdisplaybreaks 
\begin{alignat*}{1}
 & \sum_{n=0}^{\infty}P_{m\cdots21,n}^{\ilpk}(t)x^{n}\sim\left(1-\frac{t}{4}\right)\left[\frac{1}{1-x}-\frac{xt}{2(1-x)^{2}}+\sum_{k=1}^{\infty}\left[1-(2k+1)x\vphantom{+\sum_{j=1}^{\infty}(e_{m,j,k}x^{jm}-e_{m,j,k}^{\prime}x^{jm+1})}\right.^{\vphantom{-1}}\right.\\
 & \qquad\qquad\qquad\qquad\qquad\qquad\qquad\qquad\qquad\left.\left.+\sum_{j=1}^{\infty}(e_{m,j,k}x^{jm}-e_{m,j,k}^{\prime}x^{jm+1})\right]^{-1}\left(\frac{t}{4}\right)^{k}\right]\\
 & \qquad\sim\left(1-\frac{t}{4}\right)\left[\frac{1}{1-x}-\frac{xt}{2(1-x)^{2}}+\left[1-3x+\sum_{j=1}^{\infty}(e_{m,j,1}x^{jm}-e_{m,j,1}^{\prime}x^{jm+1})\right]^{-1}\frac{t}{4}\right]\\
 & \qquad\sim-\frac{t}{4(1-x)}-\frac{xt}{2(1-x)^{2}}+\left[1-3x+\sum_{j=1}^{\infty}(e_{m,j,1}x^{jm}-e_{m,j,1}^{\prime}x^{jm+1})\right]^{-1}\frac{t}{4}\\
 & \qquad=-\frac{t}{4(1-x)}-\frac{xt}{2(1-x)^{2}}+\left[1-3x+\sum_{j=1}^{\infty}(4x^{jm}-4x^{jm+1})\right]^{-1}\frac{t}{4}\\
 & \qquad=-\frac{t}{4(1-x)}-\frac{xt}{2(1-x)^{2}}+\frac{1}{1-3x+4(1-x)\sum_{j=1}^{\infty}x^{jm}}\cdot\frac{t}{4}\\
 & \qquad=-\frac{t}{4(1-x)}-\frac{xt}{2(1-x)^{2}}+\frac{1}{1-3x+4(1-x)\left(\frac{1}{1-x^{m}}-1\right)}\cdot\frac{t}{4}\\
 & \qquad=\frac{x^{2}(x^{m-2}-1)t}{(1-x)^{2}(x^{m+1}-3x^{m}+3x-1)}.
\end{alignat*}
For $m=3$, this specializes to 
\begin{align*}
\sum_{n=0}^{\infty}P_{321,n}^{\ilpk}(t)x^{n} & \sim\frac{x^{2}(x-1)t}{(1-x)^{2}(x^{4}-3x^{3}+3x-1)}\\
 & =\frac{x^{2}t}{(1-x)^{2}(1-2x-2x^{2}+x^{3})},
\end{align*}
and since $x^{2}/((1-x)^{2}(1-2x-2x^{2}+x^{3}))$ is the ordinary
generating function for \cite[A080145]{oeis} (shifted by 1), the
result follows.}
\end{proof}

\section{Descent compositions and inverse peaks: alternative proofs of Theorem \ref{t-ipk}}

Every permutation can be decomposed into a sequence of \textit{increasing
runs}\textemdash maximal increasing consecutive subsequences\textemdash and
the \textit{descent composition} of a permutation $\pi$ is the integer
composition whose parts are the lengths of the increasing runs of
$\pi$ in the order that they appear. For example, the increasing
runs of $\pi=85712643$ are $8$, $57$, $126$, $4$, and $3$, and
the descent composition of $\pi$ is $(1,2,3,1,1)$. It is easy to
see that a permutation avoids the consecutive pattern $12\cdots m$
if and only if its descent composition has every part less than $m$.

The following theorem is the main result of this section.
\begin{thm}
\label{t-ipkdescomp} For any composition $L$ of $n\geq1$, there
exists exactly one permutation $\pi\in\mathfrak{S}_{n}$ with descent
composition $L$ such that $\ipk(\pi)=0$.
\end{thm}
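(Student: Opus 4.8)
The plan is to pass to the inverse side and reduce the statement to an explicit bijection. Since $\ipk(\pi)=0$ means $\tau\coloneqq\pi^{-1}$ has no peaks, I would first characterize the no-peak permutations. A peak of $\tau$ is an ascent immediately followed by a descent; reading the descent data of $\tau$ across the positions $1,\dots,n-1$, forbidding every such configuration forces all descents to come before all ascents. Hence $\tau$ has no peaks precisely when $\Des(\tau)=\{1,2,\dots,d\}$ for some $0\le d\le n-1$, i.e.\ when $\tau$ strictly decreases and then strictly increases (a ``valley'' permutation).

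Next I would parametrize the valley permutations. In such a $\tau$ the global minimum, the value $1$, sits at the valley; the values on the decreasing (left) branch form a subset $A\subseteq\{2,3,\dots,n\}$ listed in decreasing order, and the remaining values $B=\{2,\dots,n\}\setminus A$ occupy the increasing (right) branch in increasing order. Thus the valley permutations of $\mathfrak{S}_{n}$ are in bijection with subsets $A\subseteq\{2,\dots,n\}$, and there are $2^{n-1}$ of them. The heart of the argument is to compute $\Des(\pi)=\Des(\tau^{-1})$ directly in terms of $A$; since the descent composition of $\pi$ is equivalent data to $\Des(\pi)$, this determines $L$. As $i\in\Des(\tau^{-1})$ exactly when the value $i$ lies to the right of the value $i+1$ in $\tau$, I would record the position of each value $v$ in $\tau$ (a count of how many elements of $A$ are $\ge v$, or of how many elements of $B$ are $\le v$, shifted past the valley) and compare the positions of the consecutive pair $i,i+1$. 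A short case analysis on whether $i$ and $i+1$ lie in $A$, lie in $B$, or equal $1$ collapses to the single uniform rule $i\in\Des(\pi)\iff i+1\in A$, so that $\Des(\pi)=\{a-1:a\in A\}$.

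Because $A\mapsto\{a-1:a\in A\}$ is visibly a bijection from subsets of $\{2,\dots,n\}$ onto subsets of $[n-1]$, and subsets of $[n-1]$ correspond bijectively to compositions $L$ of $n$, the construction produces exactly one valley permutation $\tau$, and hence exactly one $\pi=\tau^{-1}$ with $\ipk(\pi)=0$, for each prescribed descent composition $L$. I expect the only delicate point to be the position bookkeeping in the case analysis: one must treat the valley value $1$ (relevant only to the pair $1,2$) on its own and verify that it obeys the same rule as the generic pairs, so that the description of $\Des(\pi)$ comes out uniform. A convenient sanity check is that there are $2^{n-1}$ valley permutations and $2^{n-1}$ compositions of $n$, so it would in fact suffice to prove the map injective; the explicit formula $\Des(\pi)=\{a-1:a\in A\}$ gives the full bijection at once.
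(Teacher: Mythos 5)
Your proposal is correct and is essentially the paper's second (bijective) proof of Theorem \ref{t-ipkdescomp}, run in the opposite direction: the paper constructs from each composition $L$ the unique permutation $\pi$ that is a shuffle of $k\cdots21$ with $(k+1)(k+2)\cdots n$, whose inverse is exactly your decreasing-then-increasing ``valley'' permutation, whereas you parametrize the valley permutations $\tau$ by the set $A$ of values on the decreasing branch and verify the rule $\Des(\tau^{-1})=\{a-1:a\in A\}$ (your case analysis, including the pair involving the valley value $1$, checks out). Since both arguments exhibit the same bijection between compositions of $n$ and no-peak permutations, your write-up is a valid, equivalent rendering of the paper's argument.
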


By restricting Theorem \ref{t-ipkdescomp} to compositions with all
parts less than $m$, we see that permutations $\pi$ in $\mathfrak{S}_{n}(12\cdots m)$
with $\ipk(\pi)=0$ are in one-to-one correspondence with compositions
of $n$ with all parts less than $m$. Hence Theorem \ref{t-ipk}
is an immediate corollary of Theorem \ref{t-ipkdescomp}.

In this section we give two (related) proofs for Theorem \ref{t-ipkdescomp},
thus resulting in two additional proofs of Theorem \ref{t-ipk}. For
the first proof, we will assume familiarity with some basic definitions
from the theory of symmetric functions at the level of Stanley \cite[Chapter 7]{Stanley2001},
but let us briefly establish notation and review a few elementary
facts which will be needed for the proof.

Given a partition $\lambda$, let $s_{\lambda}$ denote the Schur
function of shape $\lambda$, and if $\mu$ is a partition contained
inside $\lambda$, then let $s_{\lambda/\mu}$ denote the skew Schur
function of shape $\lambda/\mu$. Recall that a connected skew shape
with no $2\times2$ square is called a \textit{ribbon shape}.\footnote{Ribbon shapes are also commonly called ``skew-hooks'' (e.g., by Foulkes
\cite{Foulkes1976}) or ``border strips'' (e.g., by Stanley \cite{Stanley2001}).} Given a composition $L=(L_{1},L_{2},\dots,L_{k})$, let $r_{L}$
denote the skew Schur function of the ribbon shape with $L_{i}$ squares
in row $k-i+1$ for each $i$. See Figure 1 for an example.

\begin{figure}
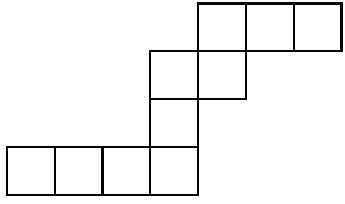

\noindent \begin{centering}
\ydiagram{4+3,3+2,3+1,4}\smallskip{}
\par\end{centering}
\caption{The ribbon shape corresponding to the composition $(4,1,2,3)$.}
\end{figure}

Let $\left\langle \cdot,\cdot\right\rangle $ denote the usual scalar
product on symmetric functions, and let $c_{\mu,\nu}^{\lambda}=\left\langle s_{\lambda/\mu},s_{\nu}\right\rangle $.
Recall that the $c_{\mu,\nu}^{\lambda}$ are called \textit{Littlewood\textendash Richardson
coefficients} and are the structure constants for products of Schur
functions when expanded back in the Schur basis; that is,
\[
s_{\mu}s_{\nu}=\sum_{\lambda}c_{\mu,\nu}^{\lambda}s_{\lambda}.
\]

We next state a couple lemmas from which Theorem \ref{t-ipkdescomp}
follows. The first lemma is equivalent to Theorem 6.2 of Foulkes \cite{Foulkes1976},
although it was first stated in the form below by Gessel \cite[Theorem 5]{Gessel1984};
see also \cite[Corollary 7.23.8]{Stanley2001}.
\begin{lem}
\label{l-scalprod}Let $L$ and $M$ be compositions of $n\geq1$.
Then $\left\langle r_{L},r_{M}\right\rangle $ is the number of permutations
$\pi$ with descent composition $L$ such that $\pi^{-1}$ has descent
composition $M$.
\end{lem}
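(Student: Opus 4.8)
The plan is to prove Lemma~\ref{l-scalprod} by expressing both sides as counts of pairs of tableaux and matching them via a standardization/descent correspondence. First I would recall the fundamental fact that the scalar product $\langle r_L, r_M\rangle$ counts standard Young tableaux of ribbon shape $L$ whose \emph{descent set} (in the tableau sense) matches the ribbon $M$; more precisely, expanding each ribbon Schur function $r_L$ in terms of fundamental quasisymmetric functions $F_{\Des(S),n}$ as $S$ ranges over standard Young tableaux of shape the ribbon $L$, one finds that $\langle r_L, r_M\rangle$ counts SYT of shape-ribbon $L$ with descent composition $M$. The key algebraic input here is that the fundamental quasisymmetric functions are dual to themselves under the scalar product in the appropriate sense, so that $\langle F_{S}, F_{T}\rangle$ picks out exactly the pairs where the descent data of $S$ and of $T$ coincide.

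Next I would set up the combinatorial bijection that connects this tableau count to the permutation count on the right-hand side. The standard tool is the Robinson--Schensted correspondence, or more directly the classical bijection between permutations and pairs consisting of a chain in the descent structure. Specifically, for a permutation $\pi$, the descent composition of $\pi$ records the ribbon shape $L$, and the descent composition of $\pi^{-1}$ records $M$; the known symmetric-function statement (Gessel's theory of quasisymmetric functions) is precisely that the number of $\pi$ with $\Des(\pi)$ giving $L$ and $\Des(\pi^{-1})$ giving $M$ equals $\langle r_L, r_M\rangle$. I would carry this out by writing the quasisymmetric generating function $\sum_\pi F_{\Des(\pi),n}$ over all $\pi$ with a fixed descent composition $L$, observing this equals $r_L$ (Gessel), and then pairing the $L$-sum against the $M$-sum under the scalar product, using that $\langle F_{S}, F_{T}\rangle = \delta_{S,T}$ on the relevant basis indexing.

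The main obstacle, and the step I would spend the most care on, is correctly handling the duality of quasisymmetric functions under the symmetric-function scalar product. The scalar product $\langle\cdot,\cdot\rangle$ is defined on \emph{symmetric} functions, yet $r_L$ and $r_M$ are symmetric while $F_{\Des(\pi),n}$ are only quasisymmetric; so one cannot naively pair fundamental quasisymmetric functions term by term. The clean route is to use the noncommutative/quasisymmetric duality: $r_L = \sum_{\Des(\pi)=L} F_{\Des(\pi^{-1})}$ expresses the ribbon as a sum of fundamentals indexed by inverse descent compositions, and then the scalar product of two ribbons counts common indices. I would justify this by invoking the established result (as the excerpt notes, this is essentially Foulkes~\cite{Foulkes1976} and Gessel~\cite{Gessel1984}) rather than rederiving quasisymmetric duality from scratch, and conclude that $\langle r_L, r_M\rangle$ equals the number of permutations $\pi$ with $\Des(\pi)$-composition $L$ and $\Des(\pi^{-1})$-composition $M$, which is exactly the claim.
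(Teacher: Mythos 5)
The paper does not actually prove Lemma~\ref{l-scalprod}: it states the lemma and cites Foulkes \cite{Foulkes1976} (Theorem 6.2), Gessel \cite{Gessel1984}, and Stanley \cite{Stanley2001}, so your proposal, which ultimately also rests on citing that literature, matches the paper's treatment, and the sketch you wrap around the citation is the standard quasisymmetric argument from those sources.

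Two cautions, though, if you want the sketch to stand as a proof rather than a citation. First, the intermediate claim $\left\langle F_{S},F_{T}\right\rangle =\delta_{S,T}$ is not merely ``naive'' but ill-formed: the Hall scalar product is defined on symmetric functions, and the fundamental quasisymmetric functions do not lie in $\mathrm{Sym}$. The correct statement is the duality between $\mathrm{QSym}$ and the ribbon basis of noncommutative symmetric functions, or, staying inside $\mathrm{Sym}$: if $f$ is symmetric with fundamental expansion $f=\sum_{\alpha}c_{\alpha}F_{\alpha}$, then $\left\langle f,r_{M}\right\rangle =c_{M}$. You flag this issue yourself and pivot to the right identity, $r_{L}=\sum_{\pi}F_{\Des(\pi^{-1})}$ summed over permutations $\pi$ with descent composition $L$, which is correct. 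Second, your closing step---``invoking the established result (Foulkes and Gessel)''---is circular as written, since the established result of Foulkes and Gessel \emph{is} Lemma~\ref{l-scalprod}. What you may legitimately invoke without circularity is the duality fact $\left\langle f,r_{M}\right\rangle =c_{M}$ just stated, or equivalently the expansion $s_{\lambda/\mu}=\sum_{T}F_{\Des(T)}$ over standard Young tableaux $T$ together with the reading-word bijection sending an SYT of ribbon shape $L$ to a permutation $\pi$ with descent composition $L$ in such a way that the tableau descents of $T$ become the descents of $\pi^{-1}$. With that substitution your argument closes and is exactly the standard proof underlying the paper's citation.
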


The second lemma is Theorem 7.1 of Foulkes \cite{Foulkes1976}.
\begin{lem}
\label{l-LRcoeff}Let $\lambda$, $\mu$, and $\nu$ be partitions
of $n\geq1$ such that $\lambda$ has $p$ columns and $\nu=(n-r,1^{r})$
for some $0\leq r\leq n$. Then
\[
c_{\mu,\nu}^{\lambda}=\delta_{n-r,p}=\begin{cases}
1, & \text{if }n-r=p,\\
0, & \text{otherwise.}
\end{cases}
\]
\end{lem}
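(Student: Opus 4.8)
The plan is to evaluate the Littlewood–Richardson coefficient $c_{\mu,\nu}^{\lambda}=\langle s_{\lambda/\mu},s_{\nu}\rangle$ by combining the Littlewood–Richardson rule with the special structure of a hook $\nu=(n-r,1^{r})$. Throughout I would work in the situation in which this coefficient is actually applied, namely when $\lambda/\mu$ is a border strip (ribbon), so that $s_{\lambda/\mu}=r_{L}$ for a composition $L$ of $n$ and the lemma can be paired with Lemma~\ref{l-scalprod}. The number of columns of $\lambda$ is then the width $p=\lambda_{1}=n-\ell(L)+1$ of the strip, so the associated descent set $\Des(L)=\{L_{1},\,L_{1}+L_{2},\,\dots,\,L_{1}+\cdots+L_{\ell(L)-1}\}\subseteq[n-1]$ has size exactly $\ell(L)-1=n-p$.

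The key reduction is the standard fact---in the same family as Lemma~\ref{l-scalprod}, and available from the theory in \cite[Chapter 7]{Stanley2001}---that for a border strip $\lambda/\mu$ one has $\langle s_{\lambda/\mu},s_{\nu}\rangle=\langle r_{L},s_{\nu}\rangle$ equal to the number of standard Young tableaux of shape $\nu$ whose descent set is exactly $\Des(L)$. Thus I would rewrite $c_{\mu,\nu}^{\lambda}$ as a count of standard Young tableaux of the \emph{hook} shape $(n-r,1^{r})$ having a prescribed descent set of size $n-p$.

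Now I would analyze standard Young tableaux of a hook. Such a tableau is determined entirely by which of the entries $2,3,\dots,n$ are placed in the leg (the column of length $r$): the entry $1$ sits in the corner, the leg entries increase down the column, and the remaining entries increase along the arm. A short check shows that $i$ is a descent if and only if $i+1$ lies in the leg, so the descent set is $\{b-1:b\text{ is a leg entry}\}$, an arbitrary $r$-element subset of $[n-1]$, and each such subset is realized by exactly one hook tableau. Consequently the number of hook tableaux of shape $(n-r,1^{r})$ with descent set $\Des(L)$ is $1$ if $r=|\Des(L)|=n-p$ and $0$ otherwise; since $r=n-p$ is equivalent to $n-r=p$, this yields $c_{\mu,\nu}^{\lambda}=\delta_{n-r,p}$.

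I expect the main obstacle to be the descent-set reduction opening the second paragraph: one must set up the correspondence between border-strip skew Schur functions and descent classes of standard tableaux correctly, and verify the width-versus-descent-count bookkeeping $p=n-\ell(L)+1$. As a more hands-on alternative that stays closer to the statement, I could instead apply the Littlewood–Richardson rule directly: in a lattice filling of $\lambda/\mu$ with content $(n-r,1^{r})$ the $n-r$ ones are forced onto column-top cells while the single letters $2<3<\cdots<r+1$ occupy the remaining $n-p$ cells, and a careful reading-word analysis shows such a filling exists and is unique precisely when $n-r=p$. I would present the descent-set argument as the main line, since it makes the role of the hook transparent, and relegate the direct Littlewood–Richardson bookkeeping to a remark.
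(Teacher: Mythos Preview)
Your argument is correct. The paper does not supply its own proof of this lemma, citing instead Foulkes~\cite{Foulkes1976}; but it remarks afterward that Foulkes's proof proceeds via ``a bijection between ribbon shapes and standard Young tableaux of hook shape,'' which is precisely what your descent-set computation encodes. Your reduction $\langle r_L,s_\nu\rangle=\#\{T\in\mathrm{SYT}(\nu):\Des(T)=\Des(L)\}$ is standard (it follows from $s_\nu=\sum_T F_{\Des(T)}$ together with the duality underlying Lemma~\ref{l-scalprod}), and your analysis of hook tableaux---that the descent set of a SYT of shape $(n-r,1^r)$ is $\{b-1:b\text{ in the leg}\}$, hence an arbitrary $r$-subset of $[n-1]$ realized uniquely---is exactly Foulkes's bijection. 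The bookkeeping $p=n-\ell(L)+1$ is also right.

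One remark: you are right to restrict to the ribbon case. The lemma as printed says ``$\lambda$, $\mu$, and $\nu$ partitions of $n$,'' which cannot be literally correct for a nonzero $c_{\mu,\nu}^\lambda$; the intended hypothesis (and Foulkes's) is that $\lambda/\mu$ is a ribbon of size $n$, which is how the lemma is applied in the proof of Theorem~\ref{t-ipkdescomp}. Your alternative via the Littlewood--Richardson rule (forcing the $n-r$ ones onto the $p$ column-top cells of the ribbon) also works and would make a fine remark.
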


\begin{proof}[First proof of Theorem \ref{t-ipkdescomp}]
Let $r_{L}$ be the ribbon Schur function corresponding to the composition
$L$. Furthermore, let us write $r_{L}=s_{\lambda/\mu}$ and let $p$
be the number of columns of $\lambda$. It is easy to see that a permutation
in $\mathfrak{S}_{n}$ has no peaks if and only if its descent composition
is of the form $(1^{r},n-r)$ for some $0\leq r\leq n$. Also, the
ribbon shape corresponding to the composition $(1^{r},n-r)$ is precisely
the Young diagram corresponding to the partition $(n-r,1^{r})$, so
that $r_{(1^{r},n-r)}=s_{(n-r,1^{r})}$. Thus, in light of Lemma \ref{l-scalprod},
the number of permutations $\pi\in\mathfrak{S}_{n}$ with descent
composition $L$ such that $\pi^{-1}$ has no peaks is equal to
\begin{align*}
\sum_{r=0}^{n}\left\langle r_{L},r_{(1^{r},n-r)}\right\rangle  & =\sum_{r=0}^{n}\left\langle s_{\lambda/\mu},s_{(n-r,1^{r})}\right\rangle \\
 & =\sum_{r=0}^{n}c_{\mu,(n-r,1^{r})}^{\lambda} & \text{(since \ensuremath{c_{\mu,\nu}^{\lambda}}=\ensuremath{\left\langle s_{\lambda/\mu},s_{\nu}\right\rangle \text{)}}}\\
 & =\sum_{r=0}^{n}\delta_{n-r,p} & \text{(by Lemma }\ref{l-LRcoeff}\text{)}\\
 & =1.
\end{align*}
Hence the result follows.
\end{proof}
Now, we provide a bijective proof of Theorem \ref{t-ipkdescomp}.
\begin{proof}[Second proof of Theorem \ref{t-ipkdescomp}]
Let $L=(L_{1},L_{2},\dots,L_{k})$ be a composition of $n$. We construct
a permutation $\pi$ with descent composition $L$ and whose inverse
has no peaks by inserting the letters $1,2,\dots,n$ in a line with
$n$ positions in the following way:
\begin{enumerate}
\item Let 1 be the first letter of the $k$th increasing run, let 2 be the
first letter of the $(k-1)$th increasing run, and so on. This step
determines the positions of the letters $1,2,\dots,k$.
\item Insert the remaining letters $k+1,k+2,\dots,n$ into the remaining
positions in ascending order.
\end{enumerate}
For example, suppose that $L=(3,2,3,1)$. Then the resulting permutation
would be $\pi=456372891$, which indeed has descent composition $L$
and whose inverse $\pi^{-1}=964123578$ has no peaks. In general,
by placing the smallest $k$ letters in the initial positions of each
increasing run and the remaining letters in ascending order, it is
clear that the resulting permutation must have descent composition
$L$. Furthermore, observe that $\pi^{-1}$ consists of a decreasing
run formed by the positions of the smallest $k$ letters in $\pi$,
followed by an increasing run formed by the positions of the remaining
letters in $\pi$. Thus $\pi^{-1}$ has no peaks.

We now argue that any permutation whose inverse has no peaks must
have the form given above. Let $\pi$ be such a permutation in $\mathfrak{S}_{n}$
with $\pi^{-1}(k)=1$. Since $\pi^{-1}$ has no peaks, the first $k$
letters of $\pi^{-1}$ are descending and the remaining letters of
$\pi^{-1}$ are ascending. This means that $\pi$ is a shuffle of
the decreasing subsequence $k\cdots21$ (with $k$ in the first position
of $\pi$) and the increasing subsequence $(k+1)(k+2)\cdots n$. Hence,
$\pi$ is of the form given above, which completes the proof.
\end{proof}

There is a simple idea underlying the above proof: the permutations whose inverse has no peaks
are precisely the permutations whose entries can be drawn on a ``$<$'' shape. 
This set of permutations is an example of a \emph{monotone grid class}, a type of permutation class arising in the
study of permutation patterns \cite{Bevan,HuczynskaVatter,MurphyVatter}.

Also, we note that the bijection used in the above proof is closely related
to a bijection between ribbon shapes and standard Young tableaux of
hook shape which Foulkes uses to prove Lemma \ref{l-LRcoeff}. Thus,
our second proof of Theorem \ref{t-ipkdescomp} is in a sense equivalent
to our first proof, but described without using the language of symmetric
functions.

Finally, we give a few other elementary consequences of Theorem \ref{t-ipkdescomp}.
Recall that a permutation $\pi$ in $\mathfrak{S}_{n}$
is \textit{alternating} if $\pi_{1}<\pi_{2}>\pi_{3}<\cdots$, and
is \textit{reverse-alternating} if $\pi_{1}>\pi_{2}<\pi_{3}>\cdots$.
\begin{cor}
For any $n\geq1$, there is exactly one alternating permutation $\pi$
in $\mathfrak{S}_{n}$ with $\ipk(\pi)=0$, and exactly one reverse-alternating
permutation $\pi$ in $\mathfrak{S}_{n}$ with $\ipk(\pi)=0$.
\end{cor}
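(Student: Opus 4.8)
The plan is to recognize that both the alternating and reverse-alternating conditions amount to fixing the descent composition, so that the corollary reduces immediately to Theorem \ref{t-ipkdescomp}.

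First I would observe that a permutation $\pi \in \mathfrak{S}_n$ is alternating precisely when it has a descent at every even position and an ascent at every odd position, i.e.\ when its descent set equals $\{2,4,6,\dots\} \cap [n-1]$. Since the descent composition of a permutation determines, and is determined by, its descent set, every alternating permutation in $\mathfrak{S}_n$ has one and the same descent composition; explicitly, this composition is $(2,2,\dots,2)$ when $n$ is even and $(2,2,\dots,2,1)$ when $n$ is odd. Call it $L_{\mathrm{alt}}$. Conversely, every permutation with descent composition $L_{\mathrm{alt}}$ is alternating, so the set of alternating permutations in $\mathfrak{S}_n$ is exactly the set of permutations with descent composition $L_{\mathrm{alt}}$.

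Next I would carry out the analogous computation for reverse-alternating permutations: $\pi$ is reverse-alternating exactly when its descent set is $\{1,3,5,\dots\} \cap [n-1]$, so all such permutations share a common descent composition $L_{\mathrm{rev}}$, namely $(1,2,2,\dots,2)$ when $n$ is odd and $(1,2,\dots,2,1)$ when $n$ is even. As before, being reverse-alternating is equivalent to having descent composition $L_{\mathrm{rev}}$.

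Finally, I would apply Theorem \ref{t-ipkdescomp} to the compositions $L_{\mathrm{alt}}$ and $L_{\mathrm{rev}}$ of $n$. The theorem yields exactly one permutation with descent composition $L_{\mathrm{alt}}$ satisfying $\ipk(\pi)=0$ and exactly one with descent composition $L_{\mathrm{rev}}$ satisfying $\ipk(\pi)=0$; by the equivalences established above, these are exactly one alternating and one reverse-alternating permutation with $\ipk(\pi)=0$, as claimed. There is essentially no obstacle in this argument: the only point meriting a moment's care is the equivalence between the alternating (respectively reverse-alternating) condition and fixing the descent composition, which is a direct unwinding of the definitions of increasing runs and descent compositions.
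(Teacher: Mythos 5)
Your proposal is correct and follows exactly the paper's own argument: identify the alternating (resp.\ reverse-alternating) permutations as those with the fixed descent composition $(2,2,\dots,2)$ or $(2,2,\dots,2,1)$ (resp.\ $(1,2,\dots,2)$ or $(1,2,\dots,2,1)$) according to the parity of $n$, and then apply Theorem \ref{t-ipkdescomp}. Your version simply spells out the descent-set computation that the paper dismisses as ``easy to see.''
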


\begin{proof}
It is easy to see that a permutation in $\mathfrak{S}_{n}$ is alternating
if and only if its descent composition is of the form $(2,2\dots,2)$
or $(2,2,\dots,2,1)$, depending on the parity of $n$. Similarly,
a permutation in $\mathfrak{S}_{n}$ is reverse-alternating if and
only if its descent composition is of the form $(1,2,2\dots,2,1)$
or $(1,2,2,\dots,2)$. Therefore, the result follows from Theorem
\ref{t-ipkdescomp}.
\end{proof}
Given a permutation $\pi$ in $\mathfrak{S}_{n}$,
we say that $i\in[n-1]$ is a \textit{descent} of $\pi$ if $\pi_{i}>\pi_{i+1}$.
Let $\des(\pi)$ denote the number of descents of $\pi$, which is
one less than the number of parts of the descent composition of $\pi$.
Also, let $\pk(\pi)$ be the number of peaks of $\pi$ and $\lpk(\pi)$
the number of left peaks of $\pi$.
\begin{cor}
For any $n\geq1$ and $k\geq0$, the number of permutations $\pi$
in $\mathfrak{S}_{n}$ with $\des(\pi)=k$ and $\ipk(\pi)=0$ is equal
to ${n-1 \choose k}$.
\end{cor}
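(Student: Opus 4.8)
The plan is to obtain this as a direct corollary of Theorem~\ref{t-ipkdescomp}, reducing the count to an elementary composition-counting problem. By Theorem~\ref{t-ipkdescomp}, the permutations $\pi\in\mathfrak{S}_{n}$ with $\ipk(\pi)=0$ are in bijection with the compositions of $n$: each composition $L$ of $n$ is the descent composition of exactly one permutation $\pi$ with $\ipk(\pi)=0$. So the first step is to translate the added constraint $\des(\pi)=k$ into a condition on the descent composition of $\pi$.

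Since $\des(\pi)$ is one less than the number of parts of the descent composition of $\pi$ (as noted just before the statement), the permutations counted here correspond precisely to the compositions of $n$ having exactly $k+1$ parts. The second step is then to count compositions of $n$ into exactly $k+1$ parts. By a standard stars-and-bars argument---writing $n$ as $n$ units in a row and choosing $k$ of the $n-1$ gaps between consecutive units at which to split---there are exactly $\binom{n-1}{k}$ such compositions, which yields the claimed formula. As a sanity check, summing over $k$ recovers $\sum_{k\geq0}\binom{n-1}{k}=2^{n-1}$, the total number of compositions of $n$, consistent with Theorem~\ref{t-ipkdescomp} counting one $\ipk=0$ permutation per composition.

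There is no substantial obstacle in this argument: the real content is entirely packaged in Theorem~\ref{t-ipkdescomp}, and what remains is routine enumeration. The only points requiring minor care are the off-by-one when passing between the number of descents and the number of parts, and a quick check of the degenerate cases---for $k\geq n$ both sides vanish (there is no composition of $n$ into more than $n$ parts, and $\binom{n-1}{k}=0$), while for $k=0$ the unique composition $(n)$ corresponds to the increasing permutation $12\cdots n$, matching $\binom{n-1}{0}=1$.
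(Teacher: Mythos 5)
Your proof is correct and follows essentially the same route as the paper: invoking Theorem~\ref{t-ipkdescomp} and reducing the count to the standard fact that there are $\binom{n-1}{k}$ compositions of $n$ into $k+1$ parts. The paper simply cites this composition count as well known, whereas you supply the stars-and-bars argument and a sanity check; the mathematical content is identical.
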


\begin{proof}
This is an immediate consequence of Theorem \ref{t-ipkdescomp} and
the well-known fact that, for any integers $n,k\geq1$, there are
${n-1 \choose k-1}$ compositions of $n$ into $k$ parts \cite[A097805]{oeis}.
\end{proof}
\begin{cor}
For any $n\geq1$ and $k\geq0$, the number of permutations $\pi$
in $\mathfrak{S}_{n}$ with $\pk(\pi)=k$ and $\ipk(\pi)=0$ is equal
to ${n \choose 2k+1}$.
\end{cor}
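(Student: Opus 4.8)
The plan is to combine Theorem~\ref{t-ipkdescomp} with a description of the peak number in terms of the descent composition, and then count the relevant compositions with a generating function. First I would record the structural fact that the peak set of a permutation $\pi$ is determined entirely by its descent set: a position $i$ is a peak exactly when $i$ is a descent but $i-1$ is not. Reading this off the descent composition $L=(L_{1},\dots,L_{m})$ of $\pi$, the candidate peaks are the right endpoints of the increasing runs $1,\dots,m-1$ (the last run contributes nothing, as its endpoint is position $n$), and such an endpoint is a genuine peak if and only if its run has length at least $2$—a run of length $1$ is a descent bottom rather than a peak. Hence $\pk(\pi)$ depends only on $L$, and equals the number of parts among $L_{1},\dots,L_{m-1}$ that are at least $2$.

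With this in hand, Theorem~\ref{t-ipkdescomp} reduces the problem to pure composition enumeration: the permutations counted by the corollary are in bijection with compositions $L$ of $n$ having exactly $k$ parts that are $\ge 2$ among all parts \emph{except the last}. I would count these with a generating function in which $x$ marks the size and $t$ marks the large non-final parts. Modeling a composition as a (possibly empty) sequence of non-final parts followed by a single final part $\ge 1$, each non-final part contributes $x+t\,x^{2}/(1-x)$ (either a $1$, or a part $\ge 2$ weighted by $t$) while the final part contributes $x/(1-x)$, giving
\[
F(x,t)\;=\;\frac{x/(1-x)}{\,1-\left(x+\dfrac{t\,x^{2}}{1-x}\right)}\;=\;\frac{x}{(1-x)^{2}-t\,x^{2}}.
\]
Extracting the coefficient of $t^{k}$ yields $x^{2k+1}/(1-x)^{2k+2}$, whose coefficient of $x^{n}$ is $\binom{n}{2k+1}$, as desired.

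The generating-function extraction is routine; the step requiring the most care is the combinatorial translation in the first paragraph—correctly accounting for the boundary runs (the first and last increasing runs) and for runs of length $1$, so that the tally ``number of non-final parts that are $\ge 2$'' matches $\pk(\pi)$ on the nose. Once that identity between $\pk(\pi)$ and the descent composition is nailed down, the rest is a short computation, and a small check (say $n=3$, where the counts $3,1$ agree with $\binom{3}{1},\binom{3}{3}$) would confirm the bookkeeping.
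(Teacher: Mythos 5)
Your proposal is correct, and it shares its skeleton with the paper's proof: both reduce the problem, via Theorem~\ref{t-ipkdescomp}, to counting compositions of $n$ in which exactly $k$ of the non-final parts are at least $2$, using the fact that $\pk(\pi)$ equals the number of increasing runs of length greater than $1$ excluding the final run. You derive that fact directly from the observation that $i$ is a peak iff $i$ is a descent and $i-1$ is not (handling the boundary run correctly), whereas the paper simply cites it as Lemma~2.1~(a) of Gessel--Zhuang. Where you genuinely diverge is the counting step: the paper conditions on the number $j$ of letters before the final run, invokes the known count $\binom{j}{2k}$ of compositions of $j$ with $k$ parts greater than $1$, and then appeals to the identity $\sum_{j=0}^{n-1}\binom{j}{2k}=\binom{n}{2k+1}$; you instead compute the bivariate generating function
\[
F(x,t)=\frac{x/(1-x)}{1-\left(x+\dfrac{tx^{2}}{1-x}\right)}=\frac{x}{(1-x)^{2}-tx^{2}},
\]
whose coefficient of $t^{k}$ is $x^{2k+1}/(1-x)^{2k+2}$, giving $\binom{n}{2k+1}$ directly. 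Your finish is self-contained (no appeal to the two cited OEIS facts) and arguably cleaner, since the single rational function packages both the composition count and the binomial identity; the paper's version, in exchange, is shorter on computation and makes visible the refined statistic ``number of letters before the final run,'' which is what produces the hockey-stick summation $\sum_{j}\binom{j}{2k}$.
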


\begin{proof}
The number of peaks of a permutation $\pi$ is equal to the number
of increasing runs of $\pi$ of length greater than 1, not including
the final increasing run \cite[Lemma 2.1 (a)]{Gessel2018}. Furthermore,
it is known that there are ${n \choose 2k}$ compositions of $n$
with $k$ parts greater than 1 \cite[A034839]{oeis}. Therefore, by
conditioning on the number of letters of $\pi$ prior to the final
increasing run and using Theorem \ref{t-ipkdescomp}, we see that
there are $\sum_{j=0}^{n-1}{j \choose 2k}$ permutations $\pi$ in $\mathfrak{S}_{n}$
with $\pk(\pi)=k$ and $\ipk(\pi)=0$. The result then follows from
the identity $\sum_{j=0}^{n-1}{j \choose 2k}={n \choose 2k+1}$ \cite[A034867]{oeis}.
\end{proof}
\begin{cor}
For any $n\geq1$ and $k\geq0$, the number of permutations $\pi$
in $\mathfrak{S}_{n}$ with $\lpk(\pi)=k$ and $\ipk(\pi)=0$ is equal
to ${n \choose 2k}$.
\end{cor}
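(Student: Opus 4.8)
The plan is to follow the same template as the two preceding corollaries: use Theorem \ref{t-ipkdescomp} to replace the enumeration of permutations by an enumeration of descent compositions, translate the statistic $\lpk$ into a condition on the composition, and then evaluate the resulting sum. By Theorem \ref{t-ipkdescomp}, the permutations $\pi\in\mathfrak{S}_{n}$ with $\ipk(\pi)=0$ are in bijection with the compositions $L=(L_{1},L_{2},\dots,L_{j})$ of $n$, each $L$ giving a unique permutation $\pi$ whose inverse has no peaks. So it suffices to determine $\lpk(\pi)$ in terms of $L$ and count the $L$ for which $\lpk(\pi)=k$.

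First I would record how $\lpk$ depends on $L$. Since position $1$ is never a peak, we have $\lpk(\pi)=\pk(\pi)+[\pi_{1}>\pi_{2}]$, and $\pi_{1}>\pi_{2}$ holds exactly when the first increasing run is a single letter, i.e.\ $L_{1}=1$ (which forces $j\geq2$). Combining this with the fact used in the previous corollary---that $\pk(\pi)$ is the number of \emph{non-final} parts of $L$ exceeding $1$---gives, for any composition with $j\geq2$ parts,
\[
\lpk(\pi)=1+\#\{\,i:2\leq i\leq j-1,\ L_{i}>1\,\},
\]
because the first part contributes a left peak whether or not it exceeds $1$, while the final part never contributes. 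The case $k=0$ forces $j=1$, i.e.\ the single composition $(n)$, which accounts for $\binom{n}{0}=1$; so I assume $k\geq1$ from here on.

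Next I would count the admissible compositions. For $k\geq1$ we need $j\geq2$ together with exactly $k-1$ of the ``middle'' parts $L_{2},\dots,L_{j-1}$ exceeding $1$, while the first part $a=L_{1}$ and last part $b=L_{j}$ may be arbitrary positive integers. Using the count of compositions of $m$ with exactly $p$ parts greater than $1$ (namely $\binom{m}{2p}$, cited as \cite[A034839]{oeis} in the previous corollary), the number of admissible compositions is
\[
\sum_{\substack{a,b\geq1\\a+b\leq n}}\binom{n-a-b}{2k-2}=\sum_{s=2}^{n}(s-1)\binom{n-s}{2k-2},
\]
after grouping by $s=a+b$ (there being $s-1$ choices of $(a,b)$). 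Reindexing by $i=n-s$ writes this as $\sum_{i}\binom{n-1-i}{1}\binom{i}{2k-2}$, and the standard Vandermonde-type convolution $\sum_{i}\binom{N-i}{a}\binom{i}{b}=\binom{N+1}{a+b+1}$ (here $N=n-1$, $a=1$, $b=2k-2$) yields $\binom{n}{2k}$, as desired.

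The one delicate point is getting the $\lpk$ formula exactly right: the first part of the descent composition plays an asymmetric role, always producing a left peak as soon as there is at least one descent, regardless of its size, whereas an interior part produces a left peak only when it exceeds $1$, and the final part never does. Correctly isolating this ``free'' contribution of the first part (and checking the boundary cases $j=1$ and $n=1$, where no $\pi_{2}$ exists) is where care is needed; once the formula $\lpk(\pi)=1+\#\{\,i:2\leq i\leq j-1,\ L_{i}>1\,\}$ is established, the enumeration reduces to the routine convolution above.
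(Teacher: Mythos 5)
Your proof is correct. Like the paper, you begin by using Theorem \ref{t-ipkdescomp} to reduce the problem to counting descent compositions (this reduction is valid because $\lpk$ is determined by the descent composition), but you finish by a genuinely different route. The paper avoids all computation: it introduces the right peak statistic $\rpk$, invokes the fact that $\rpk(\pi)$ equals the number of increasing runs of length greater than $1$ (so that the compositions arising from permutations with $k$ right peaks are exactly the $\binom{n}{2k}$ compositions of $n$ with $k$ parts exceeding $1$), and then transports this count to left peaks via the reversal involution $\pi\mapsto\pi^{r}$, which induces a bijection between the two sets of compositions. You instead derive the explicit formula $\lpk(\pi)=1+\#\{\,i:2\leq i\leq j-1,\ L_{i}>1\,\}$ for compositions with $j\geq2$ parts---and your accounting of the asymmetric role of the first part is exactly right: once there is at least one descent, the first run contributes a left peak whether $L_{1}=1$ (position $1$ is a left peak) or $L_{1}>1$ (its last position is a peak), while the final run never contributes. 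You then evaluate $\sum_{s=2}^{n}(s-1)\binom{n-s}{2k-2}$ by the convolution $\sum_{i}\binom{N-i}{a}\binom{i}{b}=\binom{N+1}{a+b+1}$, which does give $\binom{n}{2k}$, and your treatment of the boundary cases ($k=0$ forcing the composition $(n)$, and $n=1$) is correct. The trade-off: your argument costs an explicit binomial identity but needs no auxiliary statistic beyond $\pk$, whereas the paper's reversal trick is computation-free and makes the appearance of $\binom{n}{2k}$ immediate, at the price of introducing $\rpk$ and the induced involution on compositions.
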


\begin{proof}
Let us call $i\in\{2,3,\dots,n\}$ a \textit{right peak} of $\pi$
if either $i$ is a peak of $\pi$ or if $i=n$ and $\pi_{n}>\pi_{n-1}$.
Let $\rpk(\pi)$ denote the number of right peaks of $\pi$. Define
$C_{n,k}^{\lpk}$ to be the set of compositions which are descent
compositions of permutations in $\mathfrak{S}_{n}$ with $k$ left
peaks, and define $C_{n,k}^{\rpk}$ in the analogous way but for right
peaks. It is known that $\rpk(\pi)$ is equal to the number of increasing
runs of $\pi$ of length greater than 1 \cite[Lemma 2.1 (d)]{Gessel2018}.
Again using the fact that there are ${n \choose 2k}$ compositions
of $n$ with $k$ parts greater than 1, we see that $|C_{n,k}^{\rpk}|={n \choose 2k}$.
In light of Theorem \ref{t-ipkdescomp}, it suffices to show that
$|C_{n,k}^{\lpk}|={n \choose 2k}$; we do this by giving a bijection
between $C_{n,k}^{\lpk}$ and $C_{n,k}^{\rpk}$.

Given a permutation $\pi=\pi_{1}\pi_{2}\cdots\pi_{n}$ in $\mathfrak{S}_{n}$,
define the \textit{reverse} of $\pi$ to be $\pi^{r}\coloneqq\pi_{n}\cdots\pi_{2}\pi_{1}$.
Clearly, the reversal operation $\pi\mapsto\pi^{r}$ is an involution
on $\mathfrak{S}_{n}$ which toggles between left peaks and right
peaks. Moreover, reversal induces an involution on compositions: if
$L$ is a composition, then define $L^{r}$ to be the descent composition
of $\pi^{r}$, where $\pi$ is any permutation whose descent composition
is $L$. (It is easy to see that $L^{r}$ does not depend on the specific
choice of $\pi$.) This involution on compositions restricts to a
bijection between $C_{n,k}^{\lpk}$ and $C_{n,k}^{\rpk}$, and we
are done.
\end{proof}

\section{A bijective proof of Theorem \ref{t-ilpk}}

In this final section, we give a bijective proof of Theorem \ref{t-ilpk}. Here, we assume basic familiarity with regular expressions. For an introduction to regular expressions as they arise in the theory of computation, see \cite[Sec.\ 1.3]{Sipser}; for an explication of the role of regular expressions in symbolic combinatorics, see \cite[Secs.\ I.4 \& A.7]{Flajolet2009}.

Our proof consists of multiple stages, and before we begin the proof, let us establish some new notation and outline the steps of our proof. Define the sets $N_n$ and $N_{n}^{\prime}$ by
\[
N_{n}\coloneqq\{\,\pi\in\mathfrak{S}_{n} : \lpk(\pi)=1\,\}\quad\text{and}\quad N_{n}^{\prime}\coloneqq\{\,\pi\in N_{n} : \pi^{-1}\in\mathfrak{S}_{n}(321)\,\}.
\]
It is clear that $N_{n}^{\prime}$ is in bijection with permutations in $\mathfrak{S}_n(321)$ satisfying $\ilpk(\pi)=1$, which are precisely the permutations counted by Theorem \ref{t-ilpk}.

Let $A^n$ denote the set of words of length $n$ on the alphabet $\{a,b,c\}$, and let $W_n$ denote the set of words $w \in A^n$ of the form $w = a^i c u ac^j$, where $i,j\geq 0$ and $u\in A^{n-i-j-2}$, such that $w$ avoids the subwords $bba$, $bbb$, $cba$, and $cbb$. (We say that $w$ \emph{avoids} $v$ if $w$ does not contain $v$ as a subword, i.e., we cannot write $u = \alpha v \beta$ for some---possibly empty---words $\alpha$ and $\beta$.) The first step of our proof will be to establish the following proposition.

\begin{prop} \label{lem:NW}
The sets $N_{n}^{\prime}$ and $W_n$ are in bijection for every $n \geq 1$.
\end{prop}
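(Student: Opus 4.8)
The plan is to recast both defining conditions of $N_n'$ in terms of descent sets and only then build the word. Write $\Des(\pi)$ for the descent set and recall that the inverse descent set of $\pi$ is $\Des(\pi^{-1})$. On one hand, $\lpk(\pi)=1$ holds exactly when $\pi$ has a single maximal descending run, i.e.\ when $\Des(\pi)$ is one nonempty block of consecutive integers $\{p,p+1,\dots,v-1\}$ (a left peak is precisely a descent whose predecessor is an ascent or the left boundary, so the number of left peaks equals the number of maximal descent blocks). On the other hand, $\pi^{-1}\in\mathfrak{S}_n(321)$ holds exactly when $\Des(\pi^{-1})$ contains no two consecutive integers, since a consecutive occurrence of $321$ in $\pi^{-1}$ is nothing but a pair $k,k+1\in\Des(\pi^{-1})$. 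So I would first prove a lemma identifying $N_n'$ with the permutations whose descent set is a single block and whose inverse descent set is \emph{sparse} (no two consecutive elements). This already explains the target: the single descending run should produce the skeleton $a^i c\,u\,ac^j$ (an initial ascending run up to the peak, the descent, then a final ascending run out of the valley), while the sparseness of $\Des(\pi^{-1})$ is exactly the ``no two adjacent'' condition that tiling/Fibonacci counts enforce, and which the forbidden factors will police.

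Next I would define an explicit map $\Phi\colon N_n'\to A^n$ that reads $\pi$ and emits one letter at each of the $n$ steps, recording \emph{simultaneously} where the step sits relative to the descending run and the local inverse-descent behaviour there. Both pieces of data are genuinely needed: the permutations $132$ and $312$ have the same peak and valley but differ in $\Des(\pi^{-1})$, so a purely positional scan would collapse them, and this is why the encoding must weave the descent-block data together with the inverse-descent data. The forced first $c$ should mark the peak and the forced last $a$ the valley, so that the output automatically has the shape $a^i c\,u\,ac^j$; I would verify this by checking that nothing before the peak can be emitted as anything but $a$ and nothing after the valley as anything but $c$.

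The crux — and the step I expect to be the main obstacle — is to show that $\Phi(\pi)$ avoids $bba,bbb,cba,cbb$ if and only if $\Des(\pi^{-1})$ is sparse, given that $\Des(\pi)$ is already a single block. This amounts to proving that each forbidden three-letter window, read back through the encoding, is exactly a local pattern in $\pi$ that produces two consecutive inverse descents (equivalently, a consecutive $321$ in $\pi^{-1}$) or splits the descending run into two blocks. I expect the two factors beginning with $b$ to detect inverse descents internal to the descending run, while the two beginning with $c$ detect inverse descents that straddle the peak or the valley; the example $\pi=35214$, where $1,2\in\Des(\pi^{-1})$ arise across three different runs, shows that these boundary (cross-run) obstructions are real, so the analysis must treat the windows abutting the forced $c$ and the forced $a$ separately and with care. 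Pinning down the precise meaning of each letter so that exactly these four factors, and no others, are the obstructions is the delicate part.

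Finally I would construct the inverse $\Psi\colon W_n\to N_n'$, reading $a^i c\,u\,ac^j$ and reconstructing $\pi$ by placing the values $1,2,\dots,n$ so as to realize the descent block (from the skeleton) together with the prescribed sparse inverse-descent set (from $u$), in the spirit of the value-insertion construction of Section~3, and then check that $\Phi$ and $\Psi$ are mutually inverse. As a consistency check, the growth rate $\sim f_{n-1}f_n$ coming from Theorem~\ref{t-ilpk} is reassuring: it reflects two essentially independent Fibonacci-type choices (one governing the descending run, one governing the flanking ascending runs), which is precisely the structure that the sparse inverse-descent condition, split by the single descent block, should expose.
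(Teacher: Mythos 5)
Your two preliminary reductions are correct and consistent with the paper ($\lpk(\pi)=1$ iff $\Des(\pi)$ is a single nonempty interval; $\pi^{-1}\in\S_n(321)$ iff $\Des(\pi^{-1})$ contains no two consecutive elements, which is exactly how the paper reads off consecutive $321$'s), but the proposal stops precisely where the proposition begins: the map $\Phi$ is never defined, and you explicitly defer the ``delicate part'' of pinning down what each letter means so that exactly $bba$, $bbb$, $cba$, $cbb$ are the obstructions. That deferred step \emph{is} the content of the result. The missing idea is that the word should be indexed by \emph{values}, not positions: the paper fixes the canonical decomposition $\pi=\alpha\beta\gamma$ (increasing, decreasing, increasing, with $\beta$ as short as possible) and sets $w_i=a$, $b$, or $c$ according to which block contains the value $i$. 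You came close---your $132$ vs.\ $312$ example correctly shows a positional scan loses information---but your fix, ``emit one letter at each of the $n$ steps, recording where the step sits relative to the descending run and the local inverse-descent behaviour,'' still reads as a position-indexed scan with decorations, and no concrete emission rule is given. Once the value-indexing is in place, everything you flag as the main obstacle collapses to a mechanical check: $i$ is an inverse descent of $\pi$ iff $w_iw_{i+1}\in\{ba,bb,ca,cb\}$ ($bb$ because $\beta$ is decreasing, the mixed pairs by the left-to-right order of the blocks), and chaining two such windows forces the middle letter to be $b$, yielding exactly the four forbidden factors; no separate analysis of windows ``straddling the peak or the valley'' is needed, contrary to your expectation.

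Two further points your plan leaves open, which the paper must (and does) handle. First, well-definedness and the skeleton: the $\alpha\beta\gamma$ decomposition is four-fold ambiguous (the letters at the left peak and right valley can each go two ways), so the canonical choice is essential both for $\phi$ to be well defined and for the image to be characterized as $\{a^i c\,u\,ac^j\}$: the first $c$ is forced because the smallest value outside $\alpha$ must sit at the right valley and hence lie in $\gamma$, and the last $a$ before $c^j$ because the largest value outside $\gamma$ must sit at the left peak and hence lie in $\alpha$. (Note this is the reverse of your guess that ``the forced first $c$ marks the peak and the forced last $a$ the valley''; in the paper's encoding the first $c$ marks the valley's value and the last $a$ the peak's value.) Second, surjectivity: given a word $a^i c\,u\,ac^j$ avoiding the four factors, one must reconstruct $\pi$ by sorting the $a$-values increasingly, the $b$-values decreasingly, and the $c$-values increasingly, and then \emph{verify that the resulting decomposition is the canonical one} (otherwise $\phi(\pi)$ need not equal the word); your inverse $\Psi$ is only gestured at. As written, the proposal is a plausible roadmap whose central construction and both verification steps are absent, so it does not yet constitute a proof.
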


Upon proving Proposition \ref{lem:NW}, it remains to show $|W_n| = \sum_{i=1}^{n-1} \sum_{k=1}^i f_{k-1} f_k$. It is easily checked that $\sum_{i=1}^{n-1} \sum_{k=1}^i f_{k-1} f_k = \sum_{k=1}^{n-1} \sum_{j=0}^{n-k-1} f_{k-1} f_k$, so we will instead show that 
\begin{equation} \label{eqn:fibonaccisum}
|W_n| = \sum_{k=1}^{n-1} \sum_{j=0}^{n-k-1} f_{k-1} f_k.
\end{equation}
To do so, we partition $W_n$ into subsets $W_n^{j,k}$ corresponding to the summands in Equation (\ref{eqn:fibonaccisum}), and then construct a bijection between each $W_n^{j,k}$ and a certain set of tilings whose cardinality is $f_{k-1} f_k$. To define the subsets $W_n^{j,k}$, we shall need to characterize the words in $W_n$ as those matching a certain regular expression. Let $\theta^*$ denote the Kleene star of a regular expression $\theta$, and let $\theta^+ \coloneqq \theta^* \theta$.

\begin{prop} \label{lem:regexmain}
Let $n \geq 1$ and $w \in A^n$. Then we have $w \in W_n$ if and only if $w$ matches the regular expression $a^* c \,(c \cup bc \cup a^+ b \cup a^+ c)^*\, a^+ c^*$.
\end{prop}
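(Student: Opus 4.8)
The plan is to prove the stated equivalence by establishing both inclusions of languages, after first recasting the two defining conditions of $W_n$ in a more usable form. For the avoidance condition, I would observe that the four forbidden subwords $bba$, $bbb$, $cba$, $cbb$ are precisely the length-three words $xbz$ with $x \in \{b,c\}$ and $z \in \{a,b\}$; consequently, $w$ avoids all four if and only if every occurrence of $b$ that has both a left and a right neighbor is \emph{safe}, meaning it is immediately preceded by $a$ or immediately followed by $c$. For the shape condition, I would note that $w$ has the form $a^i c\, u\, a c^j$ exactly when the first letter of $w$ other than $a$ is a $c$ and the last letter of $w$ other than $c$ is an $a$; in particular no $b$ sits at either end, so every $b$ occurring in a word of $W_n$ is interior and hence subject to the safety requirement.

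For the inclusion that matching the regular expression implies membership in $W_n$, I would argue directly from the block structure of the expression. The leading factor $a^* c$ forces the first non-$a$ letter to be $c$, and the trailing factor $a^+ c^*$ forces the last non-$c$ letter to be $a$, so any matching word automatically has the required shape. For avoidance, I would note that every $b$ in a matching word is produced by one of the middle blocks $bc$ or $a^+ b$: a $b$ coming from $bc$ is immediately followed by $c$, while a $b$ coming from $a^+ b$ is immediately preceded by $a$. In either case the $b$ is safe, so $w$ avoids the four subwords and lies in $W_n$.

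For the reverse inclusion, I would take $w \in W_n$, strip off the prefix $a^i c$ (matching $a^* c$) and the suffix consisting of the final maximal run of $a$'s together with the trailing $c$'s (matching $a^+ c^*$), and show the remaining middle factor $X$ matches $(c \cup bc \cup a^+ b \cup a^+ c)^*$. The essential structural observations are: (i) $X$ cannot end in $a$, since such an $a$ would belong to the final maximal $a$-run already removed, so a nonempty $X$ ends in $b$ or $c$; and (ii) within $X$, a $b$ not immediately preceded by $a$ is preceded by a letter in $\{b,c\}$, so by safety it must be followed by $c$, and it cannot be the last letter of $X$, as that would place it before the first $a$ of the final run and produce a forbidden $bba$ or $cba$. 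Using these, I would parse $X$ greedily from the left: a maximal run of $a$'s is followed by a terminator $b$ or $c$, yielding a block $a^+ b$ or $a^+ c$; a $b$ beginning a block is handled by (ii) as a block $bc$; and an isolated $c$ gives a block $c$. Each block lies in the middle alphabet, so $X$ matches the starred expression and $w$ matches the full regular expression.

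The main obstacle is this reverse inclusion, and specifically the verification that the greedy parse of $X$ never stalls. There are exactly two ways the parse could fail: a $b$ with no preceding $a$ (which only the block $bc$ can absorb, and which therefore forces a following $c$), and a dangling run of $a$'s at the right end of $X$ (which no block can terminate). The heart of the argument is to show that both configurations are impossible for $w \in W_n$ — the first is excluded by the safety reformulation of the avoidance condition together with the impossibility of a terminal $b$ in $X$, and the second by the maximality of the removed boundary $a$-run. Once these two exclusions are pinned down, the decomposition of $X$ is forced and the equivalence of Proposition \ref{lem:regexmain} follows.
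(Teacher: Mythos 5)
Your proof is correct and takes essentially the same approach as the paper: your greedy left-to-right parse of the middle factor, with its two failure modes (an unsafe $b$ and a dangling terminal run of $a$'s) ruled out by the ``safety'' reformulation of the avoidance condition and by the maximality of the stripped $a$-run, is precisely the content of the paper's Lemma \ref{lem:cva}, whose induction hypothesis (that $cva$ avoids the four subwords and $v$ does not end in $a$) carries exactly the boundary information your safety and end-of-$X$ analysis handle explicitly. One minor slip worth repairing: your ``exactly when'' recast of the shape condition is false for words of the form $a^i c^j$ (e.g.\ $w = ac$ satisfies both letter conditions but is not of the form $a^i c\,u\,ac^j$), though this is harmless in context, since a word matching the regular expression has its mandatory $c$ strictly before its mandatory $a^+$, which yields the required form directly.
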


Note that $a^* c \,(c \cup bc \cup a^+ b \cup a^+ c)^*\, a^+ c^*$ is an unambiguous regular expression, in the sense that every word that matches it does so in a unique way (see \cite[Sec.\ A.7]{Flajolet2009}). We could then ``translate'' this regular expression into a generating function for the words that match it. This generating function turns out to be $x^{2}/((1-x)^{2}(1-2x-2x^{2}+x^{3}))$, which appears in our earlier generating function proof of Theorem 2. Our present focus, however, is on bijective proof.

Let $Z_k$ be the set of words in $A^k$ that match the regular expression $a^* c \,(c \cup bc \cup a^+ b \cup a^+ c)^*$. If $w$ matches the regular expression $a^* c \,(c \cup bc \cup a^+ b \cup a^+ c)^*\, a^+ c^*$, then we can write $w$ uniquely in the form $w = za^{n-j-k} c^j$, where $1 \le k \le n-1$, $0 \le j \le n-k-1$, and $z \in Z_k$. Thus we can define
\[ W_n^{j,k} \coloneqq \{\, za^{n-j-k} c^j \in A^n : z \in Z_k \, \}. \]

Observe that each $W_n^{j,k}$ is a subset of $W_n$ by Proposition \ref{lem:regexmain}. Hence, for each $n\geq 1$, the sets $W_n^{j,k}$ (ranging over $1 \le k \le n-1$ and $0 \le j \le n-k-1$) form a partition of $W_n$. Moreover, there is a bijection from $W_n^{j,k}$ to $Z_k$: namely, remove the $a^{n-j-k} c^j$ suffix. Therefore, we have
\[ |W_n| = \sum_{k=1}^{n-1} \sum_{j=0}^{n-k-1} |W_n^{j,k}| = \sum_{k=1}^{n-1} \sum_{j=0}^{n-k-1} |Z_k| \]
and the last remaining step will be to prove the following proposition.

\begin{prop} \label{lem:Zfib}
We have $|Z_k| = f_{k-1} f_k$ for every $k\geq1$.
\end{prop}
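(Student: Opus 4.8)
The plan is to exploit the unambiguity of the regular expression $a^* c\,(c \cup bc \cup a^+ b \cup a^+ c)^*$ (already noted just after Proposition~\ref{lem:regexmain}) and pass directly to ordinary generating functions, marking word length by $x$. First I would translate each piece of the expression by the symbolic dictionary (disjoint union $\to$ sum, concatenation $\to$ product, Kleene star $\to$ geometric series): the prefix $a^* c$ contributes $\frac{x}{1-x}$, while the four alternatives inside the star contribute $x$, $x^2$, $\frac{x^2}{1-x}$, and $\frac{x^2}{1-x}$ respectively, so the star contributes $\bigl(1 - x - x^2 - \tfrac{2x^2}{1-x}\bigr)^{-1}$. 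Multiplying these and clearing the factor $(1-x)$ gives
\[
\sum_{k\ge 1} |Z_k|\,x^k = \frac{x}{(1-x)(1-x-x^2)-2x^2} = \frac{x}{1-2x-2x^2+x^3}.
\]

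The second step is to identify this rational function as the generating function for $f_{k-1}f_k$. I would factor the denominator as $1-2x-2x^2+x^3 = (1+x)(1-3x+x^2)$ and recall (or rederive via the Binet form $f_k = A\alpha^k + B\beta^k$, where $\alpha\beta = -1$ and $\alpha^2+\beta^2 = 3$) that $\sum_{k\ge1} f_{k-1}f_k\,x^k$ is exactly $x/\bigl((1+x)(1-3x+x^2)\bigr)$; equivalently, one checks that $g_k \coloneqq f_{k-1}f_k$ obeys the order-three recurrence $g_k = 2g_{k-1} + 2g_{k-2} - g_{k-3}$ encoded by the denominator, together with the initial values $g_1=1$, $g_2=2$, $g_3=6$. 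Either route finishes the argument, and I expect the verification of this Fibonacci identity to be the only genuinely computational point, being a short induction from $f_k = f_{k-1}+f_{k-2}$.

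Finally, since this section aims for a bijective argument, I would recast the count combinatorially rather than stopping at generating functions. The number $f_{k-1}f_k$ counts pairs consisting of a monomino-domino tiling of a $1\times(k-1)$ strip and one of a $1\times k$ strip, so the natural goal is an explicit bijection from $Z_k$ to such pairs, reading off the run structure of a word $a^{i_0}c\,B_1\cdots B_t$ (each block $B_s \in \{c, bc, a^+b, a^+c\}$) as two streams of tile data. Designing this map to be simultaneously well defined and invertible, while matching the length bookkeeping of the word to the two strip lengths $k-1$ and $k$, is where I expect the real difficulty to lie; the generating-function computation above then serves as the target, telling us precisely which pair of tilings each word must encode.
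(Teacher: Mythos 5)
Your proposal is correct, but it takes a genuinely different route from the paper's own proof. Your generating-function computation checks out: the unambiguity of $a^* c\,(c \cup bc \cup a^+ b \cup a^+ c)^*$ licenses the symbolic translation, $(1-x)(1-x-x^2)-2x^2 = 1-2x-2x^2+x^3 = (1+x)(1-3x+x^2)$ is right, and with the paper's convention $f_0=1$, $f_{-1}=0$ the sequence $g_k \coloneqq f_{k-1}f_k$ does satisfy $g_k = 2g_{k-1}+2g_{k-2}-g_{k-3}$ with $g_1=1$, $g_2=2$, $g_3=6$ (a short induction from $f_k=f_{k-1}+f_{k-2}$, e.g.\ reducing $g_k-2g_{k-1}-2g_{k-2}+g_{k-3}$ to $f_{k-3}(f_{k-1}-f_{k-2}-f_{k-3})=0$), so $|Z_k|=f_{k-1}f_k$ follows; your series is also consistent with the full-regex generating function $x^{2}/((1-x)^{2}(1-2x-2x^2+x^3))$ that the paper records after Proposition \ref{lem:regexmain}. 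The paper instead proves the proposition bijectively: it realizes $f_{k-1}f_k$ as the set $T_k$ of monomino-domino tilings of a $2\times k$ rectangle with all dominoes horizontal and a monomino in the top-left corner---stacking a $1\times k$ tiling that begins with a monomino on top of an arbitrary $1\times k$ tiling---and then sends each block $c$, $bc$, $a^{i-1}b$, $a^{i-1}c$ of the unambiguous decomposition of a word in $Z_k$ to an indecomposable tiling segment whose width equals the block's length. This stacking trick dissolves the difficulty you flag in your final paragraph: there is no delicate bookkeeping splitting the word length $k$ between strips of lengths $k-1$ and $k$, since the pair of tilings becomes a single width-$k$ object and length maps to width block by block, with the requirement that words begin with $c$ or $a^+c$ corresponding exactly to the top-left monomino. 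Your route buys a short, self-contained numerical verification; the paper's route buys what this section explicitly aims for, namely that the whole proof of Theorem \ref{t-ilpk} remains bijective.
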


Now that we have outlined our bijective proof of Theorem \ref{t-ilpk}, we will now fill in the details by proving Propositions \ref{lem:NW}--\ref{lem:Zfib}.

\subsection{Proof of Proposition \ref{lem:NW}}

Recall that $N_n$ is the set of permutations in $\S_n$ with exactly one left peak. Given $\pi \in \S_n$, observe that $\pi \in N_n$ if and only if (i) $\pi \not= 12\cdots n$ and (ii) $\pi$ can be written in the form $\pi = \alpha \beta \gamma$, where $\alpha$, $\beta$, and $\gamma$ are subwords of $\pi$ (possibly of length $0$) such that $\alpha$ is increasing, $\beta$ is decreasing, and $\gamma$ is increasing.

For example, if $\pi = 1\,2\,5\,10\,12\,8\,6\,4\,3\,7\,9\,11 \in \S_{12}$, then $\pi \in N_{12}$, and we can take $\alpha = 1\,2\,5\,10$ and $\beta = 12\,8\,6\,4\,3$ and $\gamma = 7\,9\,11$. See Figure \ref{fig:Nperm}; the reason we have named this set $N_n$ is because the permutations are shaped like the letter \textit{N}\@. 
Like the set of permutations whose inverse has no peaks, the set $\bigsqcup_{n\ge0} (N_n \cup \{12\cdots n\})$ is an example of a monotone 
grid class (see the remark after the second proof of Theorem \ref{t-ipkdescomp} in Section 3).


We can also characterize $N_n$ using valleys. Given $\pi$ in $\mathfrak{S}_{n}$, we say that $i\in\{2,3,\dots,n-1\}$ is a \textit{valley} of $\pi$ if $\pi_{i-1}>\pi_{i}<\pi_{i+1}$, and we say that $i\in\{2,3,\dots,n\}$ is a \textit{right valley} of $\pi$ if either $i$ is a valley of $\pi$ or if $i=n$ and $\pi_{n-1}>\pi_n$. Then $N_n$ is the set of permutations in $\S_n$ with exactly one right valley. Note that if $\pi \in N_n$, then the left peak of $\pi$ must be less than the right valley of $\pi$.


\begin{figure}
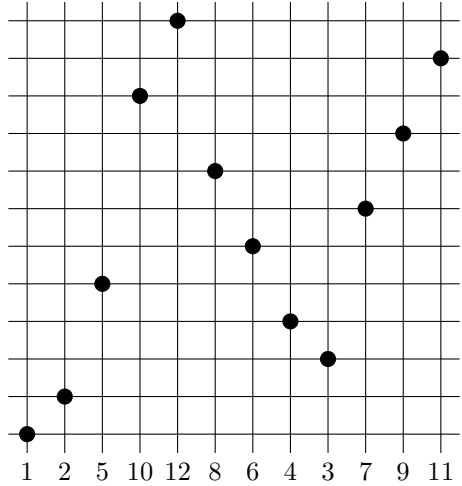

\[ \drawpermutation{1,2,5,10,12,8,6,4,3,7,9,11}{12} \]
\caption{The permutation $\pi = 1\,2\,5\,10\,12\,8\,6\,4\,3\,7\,9\,11 \in \S_{12}$ can be decomposed into an increasing sequence $\alpha = 1\,2\,5\,10$ followed by a decreasing sequence $\beta = 12\,8\,6\,4\,3$ followed by an increasing sequence $\gamma = 7\,9\,11$. The permutation is shaped like an \textit{N}. \label{fig:Nperm}}
\end{figure}

The decomposition $\alpha \beta \gamma$ is not unique. Given $\pi \in N_n$, let $i$ be the left peak of $\pi$ and $j$ the right valley of $\pi$. In the decomposition $\pi = \alpha \beta \gamma$, the letters $\pi_1, \pi_2, \ldots \pi_{i-1}$ must be in $\alpha$; the letters $\pi_{i+1}, \ldots, \pi_{j-1}$ must be in $\beta$; and the letters $\pi_{j+1}, \ldots, \pi_{n}$ must be in $\gamma$. The only choices are to place $\pi_{i}$ in either $\alpha$ or $\beta$ and to place $\pi_{j}$ in either $\beta$ or $\gamma$. Thus there are $2 \cdot 2 = 4$ possible decompositions $\pi = \alpha\beta\gamma$. Of these four, we define the \emph{canonical decomposition} of $\pi$ to be the one which places $\pi_{i}$ in $\alpha$ and places $\pi_{j}$ in $\gamma$.

Continuing the example above with $\pi = 1\,2\,5\,10\,12\,8\,6\,4\,3\,7\,9\,11$, we have $\pi_i = 12$ and $\pi_j = 3$, and the canonical decomposition is $\alpha\,|\,\beta\,|\,\gamma = 1\,2\,5\,10\,12\,|\,8\,6\,4\,|\,3\,7\,9\,11$. (We shall often write $\alpha\beta\gamma$ as $\alpha\,|\,\beta\,|\,\gamma$ to clearly demarcate the blocks of the decomposition.) Observe that the canonical decomposition of a permutation is the one with the shortest possible $\beta$. From this point forward, we may refer to the canonical decomposition simply as ``the decomposition''.

Recall that $A^n$ is the set of words of length $n$ on the alphabet $\{a,b,c\}$. A permutation $\pi \in N_n$ determines a word $\phi(\pi) = w_1 w_2 \cdots w_n \in A^n$ in the following way. Let $\pi = \alpha\beta\gamma$ be the decomposition of $\pi$. Then, for each $i \in [n]$, set $w_i = a$ if the value $i$ is in $\alpha$, set $w_i = b$ if $i$ is in $\beta$, and set $w_i = c$ if $i$ is in $\gamma$. This map $\phi \colon N_n \to A^n$ is injective, because each $w_i$ tells us which block $i$ is placed in, which uniquely determines $\pi \in N_n$.

Continuing the example above with $\pi = \alpha\,|\,\beta\,|\,\gamma = 1\,2\,5\,10\,12\,|\,8\,6\,4\,|\,3\,7\,9\,11$, we have that $w_1 = a$ since $1$ is found in $\alpha$, that $w_2 = a$ since $2$ is found in $\alpha$, that $w_3 = c$ since $3$ is found in $\gamma$, and so on. Therefore, $\phi(\pi) = aacbabcbcaca$.

\begin{lem} \label{lem:canonicalword}
Let $w \in A^n$. We have $w \in \phi(N_n)$ if and only if $w$ has the form $w = a^i c u ac^j$, where $i,j \ge 0$ and $u\in A^{n-i-j-2}$.
\end{lem}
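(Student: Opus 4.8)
The plan is to use that $\phi$ is injective and characterize its image by proving the two inclusions separately, working through the canonical decomposition in both directions. For a word $w \in A^n$, I will write $A$, $B$, $C$ for the sets of values $v \in [n]$ with $w_v = a$, $w_v = b$, $w_v = c$ respectively; for a permutation $\pi \in N_n$ with canonical decomposition $\pi = \alpha\,|\,\beta\,|\,\gamma$, these three sets are exactly the value sets of the three blocks, and since $\alpha,\gamma$ are increasing and $\beta$ is decreasing, each block is just the sorted arrangement of its value set. The whole point is to translate the peak/valley inequalities that define the canonical decomposition into statements about where the smallest and largest values sit, and conversely.

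For the forward inclusion, I would take $\pi \in N_n$ with canonical decomposition and record the boundary inequalities coming from the peak at the end of $\alpha$ and the valley at the start of $\gamma$: these give $\max\alpha > \min\gamma$ always, and, when $\beta \neq \emptyset$, also $\min\beta > \min\gamma$ and $\max\alpha > \max\beta$, i.e.\ $\min B > \min C$ and $\max B < \max A$. A value below $\min C$ is not in $C$ by definition and not in $B$ since $\min B > \min C$, hence lies in $A$; thus $w$ begins with $a^{\min C - 1}\,c$. Symmetrically, every value above $\max A$ lies in $C$, so $w$ ends with $a\,c^{\,n-\max A}$. Finally $\max A > \min C$ guarantees that the distinguished $c$ occupies an earlier position than the distinguished $a$, so $w = \phi(\pi)$ has the form $a^i c u a c^j$.

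For the reverse inclusion I would start from a word $w = a^i c u a c^j$ and read off directly that $\{1,\dots,i\}\subseteq A$ with $\min C = i+1$, that $\{n-j+1,\dots,n\}\subseteq C$ with $\max A = n-j$, and that $\min C < \max A$; hence $A,C \neq \emptyset$, and $\min B > \min C$, $\max B < \max A$. I then define $\pi$ by concatenating $\alpha$ (the increasing arrangement of $A$), $\beta$ (the decreasing arrangement of $B$), and $\gamma$ (the increasing arrangement of $C$). These inequalities make both the $\alpha$--$\beta$ and $\beta$--$\gamma$ junctions genuine descents, so $\Des(\pi)$ is the single contiguous block $\{\,|\alpha|,\,|\alpha|+1,\dots,|\alpha|+|\beta|\,\}$. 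Since only the first position of a maximal descent run is a left peak, $\pi$ has exactly one left peak, at position $|\alpha|$, so $\pi \in N_n$; its unique right valley then sits at $|\alpha|+|\beta|+1$, which forces the canonical decomposition of $\pi$ to be precisely $\alpha\,|\,\beta\,|\,\gamma$, whence $\phi(\pi) = w$. (When $B = \emptyset$ the only relevant inequality is $\max A > \min C$, producing a lone descent at the $\alpha$--$\gamma$ junction, and the argument is identical.)

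The forward direction is essentially a direct reading of the peak/valley inequalities, so I expect the reverse direction to be the main obstacle: there one must verify \emph{two} things at once, namely that the sorted concatenation $\alpha\beta\gamma$ has exactly one left peak (neither zero nor several) and that the decomposition used is the canonical, shortest-$\beta$ one, since otherwise $\phi$ would return a different word. The crux is the observation that the guaranteed inequalities force both junctions to be descents, so that $\Des(\pi)$ is a single run and the peak and valley are pinned to its endpoints. A little extra bookkeeping is needed for the degenerate cases $\beta = \emptyset$ and $|\alpha| = 1$ (where the unique left peak falls at position $1$), but these are routine.
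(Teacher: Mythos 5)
Your proposal is correct and follows essentially the same route as the paper: both directions hinge on the canonical decomposition, with the forward step locating the smallest values below $\min C$ and the largest above $\max A$ (the paper phrases this as ``$i+1$ is the smallest letter in $\beta$ or $\gamma$, hence sits at the right valley''), and the reverse step constructing $\pi$ by sorting the blocks determined by $w$ and then verifying that this decomposition is the canonical one. The only cosmetic difference is that you re-derive $\pi \in N_n$ by computing $\Des(\pi)$ as a single contiguous block, whereas the paper invokes its earlier observation that $N_n$ consists exactly of the non-identity permutations of increasing--decreasing--increasing shape; both verifications are sound.
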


\begin{proof}
Assume $w \in \phi(N_n)$, so $w = \phi(\pi)$ for some $\pi \in N_n$. Consider the decomposition $\pi = \alpha\beta\gamma$. If $1$ is in $\alpha$, then let $i$ be the largest number such that $1, 2, \dots, i$ are all in $\alpha$; otherwise set $i=0$. This implies that $w_1 w_2 \cdots w_i = a^i$ and that $w_{i+1}$ is either $b$ or $c$. Since $i+1$ is the smallest letter in $\beta$ or $\gamma$, it must be the letter located at the right valley of $\pi$, which means that it is in $\gamma$; thus, $w_{i+1} = c$. This proves that $w$ begins with $a^i c$.

If $n$ is in $\gamma$, then let $j$ be the largest number such that the values $n-j+1, \ldots, n$ are all found in $\gamma$; otherwise, let $j=0$. A similar argument as above shows that $w$ ends with $ac^j$. Therefore, either $w = a^i c u ac^j$ for some word $u$, or $w = a^i c^j$. But if $w = a^i c^j$ then $\pi = 12\cdots n$, contradicting the definition of $N_n$. Therefore, we have $w = a^i c u ac^j$ for some word $u$.

Conversely, assume that $w$ is in the form $w=a^i c u ac^j$, with $i$, $j$, and $u$ as in the statement of the lemma. Define $\pi \in \S_n$ as follows: For each $p \in [n]$, place $p$ into $\alpha$ if $w_p = a$, place $p$ into $\beta$ if $w_p = b$, and place $p$ into $\gamma$ if $w_p = c$; now sort $\alpha$ in increasing order, sort $\beta$ in decreasing order, sort $\gamma$ in increasing order, and set $\pi = \alpha\beta\gamma$. We know that $i+1$ is placed in $\gamma$ and $n-j$ is placed in $\alpha$ because $w_{i+1}=c$ and $w_{n-j}=a$; thus $\pi \neq 12\cdots n$, and so $\pi \in N_n$.

We will now show that $\alpha\beta\gamma$ is the canonical decomposition of $\pi$; from this, and from the way we defined $\pi$, it immediately follows that $\phi(\pi) = w$. Since $w_1 w_2 \cdots w_{i+1} = a^i c$, the values $1, 2, \ldots, i$ are all in $\alpha$; thus $i+1$ is the lowest value in $\beta$ or $\gamma$, so it must be the letter located at the right valley of $\pi$. Therefore, the letter located at the right valley of $\pi$ is in $\gamma$. A similar argument with $w_{n-j+1} w_{n-j+2} \cdots w_n = ac^j$ shows that the letter located at the left peak of $\pi$ is in $\alpha$. Therefore, $\alpha\beta\gamma$ is the canonical decomposition of $\pi$.
\end{proof}

Recall that $N_n'$ is the subset of $N_n$ containing those permutations whose inverse avoids the consecutive pattern $321$. Equivalently, $N_n'$ is the set of permutations $\pi \in N_n$ such that, for all $i \in [n-2]$, the letters $i$, $i+1$, and $i+2$ do not occur in decreasing order in $\pi$---that is, if $i+1$ occurs to the left of $i$, then $i+2$ must not occur to the left of $i+1$.

\begin{lem} \label{lem:avoidsubword}
Let $\pi \in \S_n$. We have $\pi \in N_n'$ if and only if $\phi(\pi)$ avoids the subwords $bba$, $bbb$, $cba$, and $cbb$.
\end{lem}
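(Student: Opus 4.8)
The plan is to reduce the defining condition of $N_n'$ to a purely local condition on consecutive triples of values, and then translate that condition into the language of the word $w = \phi(\pi)$ using the structure of the canonical decomposition $\pi = \alpha\beta\gamma$. Recall that $\pi \in N_n'$ precisely when, for every $i \in [n-2]$, the values $i$, $i+1$, $i+2$ do not appear in the left-to-right positional order $i+2, i+1, i$; equivalently, it is never the case that both $i+1$ lies to the left of $i$ and $i+2$ lies to the left of $i+1$. So it suffices to understand, from the labels $w_i, w_{i+1}, w_{i+2}$, when $v+1$ lies to the left of $v$ for consecutive values $v$.

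First I would establish the key local translation: for each $v \in [n-1]$, the value $v+1$ lies to the left of the value $v$ in $\pi$ if and only if $w_v \in \{b,c\}$ and $w_{v+1} \in \{a,b\}$. This follows from a short case analysis on the pair $(w_v, w_{v+1}) \in \{a,b,c\}^2$, using three facts about the decomposition: the blocks $\alpha$, $\beta$, $\gamma$ occupy consecutive ranges of positions in that left-to-right order, the blocks $\alpha$ and $\gamma$ are increasing (smaller value $=$ earlier position), and $\beta$ is decreasing (larger value $=$ earlier position). For instance, if $w_v = a$ then $v$ sits in $\alpha$, so $v$ precedes $v+1$ regardless of where $v+1$ lies (within $\alpha$ because $\alpha$ is increasing, and otherwise because $\alpha$ precedes $\beta$ and $\gamma$); hence $v+1$ is \emph{not} to the left of $v$, matching $w_v \notin \{b,c\}$. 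On the other hand, if $w_v \in \{b,c\}$ and $w_{v+1} \in \{a,b\}$, then $v+1$ lies either in a block preceding that of $v$, or earlier within the decreasing block $\beta$, so $v+1$ lies to the left of $v$. Running through all nine label pairs confirms the equivalence.

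Next I would combine two instances of this equivalence. The bad configuration at index $i$ requires simultaneously that $v+1$ lies left of $v$ for $v = i$ and for $v = i+1$; by the local translation this amounts to $w_i \in \{b,c\}$, $w_{i+1} \in \{a,b\}$, $w_{i+1} \in \{b,c\}$, and $w_{i+2} \in \{a,b\}$. Intersecting the two constraints on $w_{i+1}$ forces $w_{i+1} = b$, so the bad configuration occurs exactly when $w_i \in \{b,c\}$, $w_{i+1} = b$, and $w_{i+2} \in \{a,b\}$. Enumerating the four possibilities yields precisely the subwords $bba$, $bbb$, $cba$, and $cbb$.

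Therefore $\pi$ avoids every bad configuration---that is, $\pi \in N_n'$---if and only if $w = \phi(\pi)$ contains none of $bba$, $bbb$, $cba$, $cbb$ as a subword, which is exactly the claim. The main point requiring care is the local translation in the second paragraph: one must correctly combine the fixed left-to-right order of the three blocks with the fact that $\beta$ is decreasing while $\alpha$ and $\gamma$ are increasing, since these together determine the relative position of two consecutive values from their labels alone. Once that equivalence is in hand, the remaining steps are routine bookkeeping.
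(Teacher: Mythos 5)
Your proposal is correct and follows essentially the same route as the paper's proof: both translate the condition ``$v+1$ lies to the left of $v$'' (an inverse descent) into a case analysis on the pair $(w_v, w_{v+1})$, arriving at the same characterization---your $w_v \in \{b,c\}$, $w_{v+1} \in \{a,b\}$ is exactly the paper's set $\{ba, bb, ca, cb\}$---and then intersect the constraints at two consecutive indices to force $w_{i+1} = b$ and enumerate $bba$, $bbb$, $cba$, $cbb$. The only difference is cosmetic phrasing (products of letter-sets versus an explicit list of pairs), so nothing further is needed.
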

\begin{proof}
Set $w = w_1 w_2 \cdots w_n = \phi(\pi)$. Observe that $i$ is an inverse descent of $\pi$---that is, a descent of $\pi^{-1}$---if and only if $i+1$ appears to the left of $i$ in $\pi$. For a given $i \in [n-1]$, we characterize the words $w\in W_n$ such that $i$ is an inverse descent of $\phi(w)$:
\begin{itemize}
\item If $w_i w_{i+1} = aa$, then both $i$ and $i+1$ are in the increasing block $\alpha$ of $\pi$, so $i+1$ appears to the right of $i$. Thus $i$ \emph{is not} an inverse descent of $\pi$.
\item If $w_i w_{i+1} = cc$, then both $i$ and $i+1$ are in the increasing block $\gamma$, so $i$ \emph{is not} an inverse descent of $\pi$ for the same reason as above.
\item If $w_i w_{i+1} = bb$, then both $i$ and $i+1$ are in the decreasing block $\beta$, so $i+1$ appears to the left of $i$. Thus $i$ \emph{is} an inverse descent of $\pi$.
\item If $w_i w_{i+1} \in \{ab, ac, bc\}$, then the block ($\alpha$, $\beta$, or $\gamma$) that contains $i$ is to the left of the block that contains $i+1$, so $i+1$ appears to the right of $i$. Thus $i$ \emph{is not} an inverse descent of $\pi$.
\item If $w_i w_{i+1} \in \{ba, ca, cb\}$, then the same reasoning as the previous case shows that $i$ \emph{is} an inverse descent of $\pi$.
\end{itemize}
Therefore, $i$ is an inverse descent of $\pi$ if and only if $w_i w_{i+1} \in \{ba, bb, ca, cb\}$.

Finally, $i$ is the starting position of an occurrence of $321$ in $\pi^{-1}$ if and only if $i$ and $i+1$ are inverse descents of $\pi$, if and only if $w_i w_{i+1}$ and $w_{i+1} w_{i+2}$ are both in $\{ba, bb, ca, cb\}$, if and only if $w_i w_{i+1} w_{i+2} \in \{bba, bbb, cba, cbb\}$.
\end{proof}

Recall that $W_n$ was defined to be the set of words in $A^n$ that satisfy the conditions from both Lemma \ref{lem:canonicalword} and Lemma \ref{lem:avoidsubword}. Hence, Lemmas \ref{lem:canonicalword} and \ref{lem:avoidsubword} imply that $\phi$ restricted to $N'_n$ is a bijection onto $W_n$. This completes the proof of Proposition \ref{lem:NW}.

\subsection{Proof of Proposition \ref{lem:regexmain}}

Our proof of Proposition \ref{lem:regexmain} relies on the following lemma.

\begin{lem} \label{lem:cva}
Let $v$ be a word that does not end with $a$. If $cva$ avoids the subwords $bba$, $bbb$, $cba$, and $cbb$, then $v$ matches the regular expression $(c \cup bc \cup a^+ b \cup a^+ c)^*$.
\end{lem}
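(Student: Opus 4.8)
The plan is to exhibit a greedy left-to-right parse of $v$ into blocks drawn from the four types $c$, $bc$, $a^+b$, and $a^+c$, using the hypothesis to guarantee the parse never stalls. The first step is a reformulation: the four forbidden subwords $bba$, $bbb$, $cba$, $cbb$ are precisely the words of the form $xby$ with $x\in\{b,c\}$ and $y\in\{a,b\}$. Consequently, the assumption that $cva$ avoids all four is equivalent to the statement that in $cva$, every occurrence of the letter $b$ whose immediate predecessor lies in $\{b,c\}$ has immediate successor equal to $c$ (in particular, such a $b$ has a successor and hence is not the last letter of $v$).

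With this in hand, I would parse $v$ from the left as follows: whenever the current letter is $c$, peel off a single $c$ block; whenever it is $a$, peel off the maximal run $a^k$ of $a$'s together with the single letter immediately following it; and whenever it is $b$, peel off $bc$. I would then verify that each step is legal. For a block beginning with $a$: since $v$ does not end in $a$, the maximal run of $a$'s cannot be a suffix of $v$, so a following letter exists and is necessarily $b$ or $c$, yielding a valid $a^+b$ or $a^+c$ block. For a block beginning with $b$: the reformulated hypothesis forces the following letter to exist and to equal $c$, yielding a valid $bc$ block. The $c$-case is immediate. Since every block has positive length, the parse terminates and consumes all of $v$, so $v$ matches $(c \cup bc \cup a^+ b \cup a^+ c)^*$.

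The step I expect to be the main obstacle, and the heart of the argument, is justifying the $b$-case: I must confirm that whenever the parse is poised to begin a new block with a letter $b$, that $b$ is preceded inside $cva$ by a letter of $\{b,c\}$, so that the reformulated hypothesis applies. The key observation is that every one of the four block types ends in $b$ or $c$. Hence, for a block starting at position $p$ of $v$ with $p>1$, the preceding letter $v_{p-1}$ is the final letter of the previous block and so lies in $\{b,c\}$; and for $p=1$, the letter preceding it in $cva$ is the prefixed $c$, which also lies in $\{b,c\}$. Either way the hypothesis applies and forces the $bc$ block. I would state this observation—\emph{every block ends in $b$ or $c$, so every block boundary is preceded by a letter of $\{b,c\}$}—explicitly, since it is exactly what links the local forbidden-subword condition to the global parse.
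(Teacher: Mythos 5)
Your proposal is correct and is essentially the paper's argument in iterative dress: the paper proves the lemma by induction on the length of $v$, peeling off a single block ($c$, $bc$, or $a^{+}b$/$a^{+}c$) from the front in exactly your three cases, and your invariant that \emph{every block ends in $b$ or $c$} plays precisely the role of the paper's maintained prefix $c$ in the inductive hypothesis (the paper applies the hypothesis to $cv'a$ after peeling, implicitly using the same symmetry you make explicit, namely that the forbidden subwords are exactly $xby$ with $x\in\{b,c\}$, $y\in\{a,b\}$). Both proofs are sound; yours just replaces the induction with a greedy parse and states the boundary invariant openly rather than re-normalizing the context to $c$ at each step.
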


\begin{proof}
We induct on the length of $v$. The empty word matches the regular expression, so let us assume that $v$ is not empty.

First, suppose $v$ starts with $c$. Let us write $v = cv'$ for a word $v'$. Since $cva = ccv'a$ avoids the given subwords, so does $cv'a$, and since $v$ does not end with $a$, neither does $v'$. Thus, by the induction hypothesis $v'$ matches $(c \cup bc \cup a^+ b \cup a^+ c)^*$, and so $v = cv'$ also does.

Next, suppose $v$ starts with $b$. If $v = b$, then $cva = cba$ would be a forbidden subword, so $v$ has length at least $2$. Since $cv$ starts with $cb$, it must start with $cbc$---otherwise $cv$ would start with a forbidden subword $cbb$ or $cba$. Let us write $v = bcv'$ for a word $v'$. Since $cva = cbcv'a$ avoids the given subwords, so does $cv'a$, and since $v$ does not end with $a$, neither does $v'$. Thus, by the induction hypothesis $v'$ matches $(c \cup bc \cup a^+ b \cup a^+ c)^*$, and so $v = bcv'$ also does.

Finally, suppose $v$ starts with $a$. Let $i$ be the largest number such that $v$ starts with $a^i$. We know that $i$ is not the length of $v$, because $v$ does not end with $a$. So the $(i+1)$th letter of $v$ is either $b$ or $c$; without loss of generality, suppose it is $b$. Write $v = a^i bv'$ for a word $v'$. Since $cva = ca^i bv'a$ avoids the given subwords, so does $bv'a$, and since $v$ does not end with $a$, neither does $v'$. Thus, by the induction hypothesis $v'$ matches $(c \cup bc \cup a^+ b \cup a^+ c)^*$, and so $v = a^ib v'$ also does because $i\ge1$.
\end{proof}

We are now ready to prove Proposition \ref{lem:regexmain}.

\begin{proof}[Proof of Proposition \ref{lem:regexmain}]
Suppose $w$ matches the regular expression $a^* c \,(c \cup bc \cup a^+ b \cup a^+ c)^*\, a^+ c^*$. Clearly $w = a^i c u ac^j$ for some $i,j\geq 0$ and some word $u$, and a routine case-checking argument verifies that $w$ avoids $bba$, $bbb$, $cba$, and $cbb$. Therefore, $w \in W_n$ (where $n$ is the length of $w$).

Now, suppose $w \in W_n$, and write $w = a^i c u ac^j$. Since the prefix $a^i c$ matches $a^* c$ and the suffix $ac^j$ matches $ac^*$, it remains to prove that $u$ matches $(c \cup bc \cup a^+ b \cup a^+ c)^*\, a^*$. Let $l$ be the largest number such that $u$ ends with $a^l$ (so $l=0$ if $u$ does not end with $a$). We can write $u = va^l$, where $v$ is a word that does not end with $a$. Since $a^l$ matches $a^*$, and because Lemma \ref{lem:cva} implies $v$ matches $(c \cup bc \cup a^+ b \cup a^+ c)^*$, we conclude that $u = va^l$ matches $(c \cup bc \cup a^+ b \cup a^+ c)^*\, a^*$ as desired.
\end{proof}

\subsection{Proof of Proposition \ref{lem:Zfib}} \label{sec:ProofProp14}

A classical combinatorial model for Fibonacci numbers is given by monomino-domino tilings of rectangles, that is, $f_k$ is the number of tilings of a $1 \times k$ rectangle with $1 \times 1$ blocks (monominoes) and $1 \times 2$ blocks (dominoes). Furthermore, $f_{k-1}$ is the number of such tilings in which the leftmost block is a monomino. Thus, $f_{k-1} f_{k}$ is the number of ordered pairs of these tilings in which the first tiling begins with a monomino (and with no restrictions on the second tiling). By stacking the first tiling on top of the second, we get a monomino-domino tiling of a $2 \times k$ rectangle---where all dominoes are placed horizontally---with a monomino in the top-left corner. Let $T_k$ be the set of these tilings; then $|T_k| = f_{k-1} f_k$.

To complete the proof of Proposition \ref{lem:Zfib}, we will construct a bijection between $Z_k$ and $T_k$. Recall that $Z_k$ is the set of words in $A^k$ that match the regular expression $a^* c \,(c \cup bc \cup a^+ b \cup a^+ c)^*$, and observe that this regular expression is unambiguous: every word in $Z_k$ has a unique decomposition into a subword matching $a^* c$ followed by a sequence of subwords matching $c$, $bc$, $a^+b$, or $a^+c$. Equivalently, this is a decomposition into a sequence of subwords matching $c$, $bc$, $a^+b$, or $a^+c$ such that the first subword matches $c$ or $a^+c$. We now map a word $z$ in $Z_k$ to a tiling in $T_k$ by mapping the subwords of $z$ in its decomposition to indecomposable horizontal segments of the tiling as follows:
\begin{align*}
c &\mapsto \begin{tikzpicture}[scale=.5,baseline=1pc]
\draw (0,0) rectangle (1,1);
\draw (0,1) rectangle (1,2);
\end{tikzpicture} 
&
a^{i-1} b &\mapsto \underbrace{\begin{tikzpicture}[scale=.5,baseline=1pc]
\draw (0,0) rectangle (1,1);
\draw (1,0) rectangle (3,1);
\draw (3,0) rectangle (5,1);
\draw (0,1) rectangle (2,2);
\draw (2,1) rectangle (4,2);
\draw (0,0) -- (5.5,0);
\draw (0,1) -- (5.5,1);
\draw (0,2) -- (5.5,2);
\node at (1,-0.1) {};
\end{tikzpicture}
\,\cdots\,
\begin{tikzpicture}[scale=.5,baseline=1pc]
\draw (0,0) -- (0.5,0) -- (0.5,2) -- (0,2);
\draw (0,1) -- (0.5,1);
\end{tikzpicture}}_i
\\[.7pc]
bc &\mapsto \begin{tikzpicture}[scale=.5,baseline=1pc]
\draw (0,0) rectangle (2,1);
\draw (0,1) rectangle (2,2);
\end{tikzpicture}
&
a^{i-1} c &\mapsto \underbrace{\begin{tikzpicture}[scale=.5,baseline=1pc]
\draw (0,0) rectangle (2,1);
\draw (2,0) rectangle (4,1);
\draw (0,1) rectangle (1,2);
\draw (1,1) rectangle (3,2);
\draw (3,1) rectangle (5,2);
\draw (0,0) -- (5.5,0);
\draw (0,1) -- (5.5,1);
\draw (0,2) -- (5.5,2);
\node at (1,-0.1) {};
\end{tikzpicture}
\,\cdots\,
\begin{tikzpicture}[scale=.5,baseline=1pc]
\draw (0,0) -- (0.5,0) -- (0.5,2) -- (0,2);
\draw (0,1) -- (0.5,1);
\end{tikzpicture}}_i
\end{align*}
(Here, $i \ge 2$. Note that for the tilings corresponding to $a^{i-1}b$ and $a^{i-1}c$, the right end of the tiling depends on whether $i$ is even or odd.) The condition that each word in $Z_k$ starts with $c$ or $a^+c$ is equivalent to the condition that each tiling in $T_k$ has a monomino in the top-left corner. It is clear that this mapping is bijective, and we have established Proposition \ref{lem:Zfib}.

For example, if $z = aacbcccaaabbcac \in Z_{15}$, then $z$ decomposes as $aac|bc|c|c|aaab|bc|ac$, so $z$ maps to
\[ \begin{tikzpicture}[scale=.5,baseline=1pc]
\draw (0,0) rectangle (2,1);
\draw (2,0) rectangle (3,1);
\draw (0,1) rectangle (1,2);
\draw (1,1) rectangle (3,2);
\end{tikzpicture}\,\,
\begin{tikzpicture}[scale=.5,baseline=1pc]
\draw (0,0) rectangle (2,1);
\draw (0,1) rectangle (2,2);
\end{tikzpicture}\,\,
\begin{tikzpicture}[scale=.5,baseline=1pc]
\draw (0,0) rectangle (1,1);
\draw (0,1) rectangle (1,2);
\end{tikzpicture}\,\,
\begin{tikzpicture}[scale=.5,baseline=1pc]
\draw (0,0) rectangle (1,1);
\draw (0,1) rectangle (1,2);
\end{tikzpicture}\,\,
\begin{tikzpicture}[scale=.5,baseline=1pc]
\draw (0,0) rectangle (1,1);
\draw (1,0) rectangle (3,1);
\draw (3,0) rectangle (4,1);
\draw (0,1) rectangle (2,2);
\draw (2,1) rectangle (4,2);
\end{tikzpicture}\,\,
\begin{tikzpicture}[scale=.5,baseline=1pc]
\draw (0,0) rectangle (2,1);
\draw (0,1) rectangle (2,2);
\end{tikzpicture}\,\,
\begin{tikzpicture}[scale=.5,baseline=1pc]
\draw (0,0) rectangle (2,1);
\draw (0,1) rectangle (1,2);
\draw (1,1) rectangle (2,2);
\end{tikzpicture}.
\]

Our bijective proof of Theorem \ref{t-ilpk} involved several different bijections; let us now describe the composite bijection. Since we had to partition $W_n$ into subsets $W_n^{j,k}$, our bijection must keep track of $j$ and $k$ as well as the resulting tiling. Thus, we are really mapping $N'_n$ onto the set of ordered triples $(j,k,\tau)$ such that $1 \le k \le n-1$, $0 \le j \le n-k-1$, and $\tau \in T_k$. The number of such ordered triples is the right side of Equation \eqref{eqn:fibonaccisum}. We illustrate the bijection using the example $\pi = 1\,2\,8\,9 \,10 \,14\,16\,17\,|\,12\, 11 \,4\,|\,3\,5\,6\,7 \,13 \,15\,18\,19\,20 \in N'_{20}$. We have $\phi(\pi) = aacbcccaaabbcacaaccc \in W_{20}$, and in fact $aacbcccaaabbcac|aa|ccc \in W^{3,15}_{20}$ due to the prefix $aacbcccaaabbcac \in Z_{15}$ and the suffix $a^2 c^3$. Finally, $aacbcccaaabbcac$ maps to the tiling $\tau$ that we saw in the previous example, so $\pi$ maps to $(3,15,\tau)$.

\subsection{Remarks on avoiding \texorpdfstring{$m \cdots 21$}{m...21}}

Parts of the construction we have described can be generalized to permutations avoiding the consecutive pattern $m \cdots 21$, for any $m \ge 3$. We outline the more general construction here but omit the details.

Let $W_n^{(m)}$ denote the set of words $w$ in $A^n$ of the form $w = a^i cuac^j$, where $i,j \ge 0$ and $u \in A^{n-i-j-2}$, such that $w$ avoids the subwords $b^{m-1}a$, $b^m$, $cb^{m-2}a$, and $cb^{m-1}$. Then the map $\phi \colon N_n \to A^n$ restricts to a bijection between 
$\{\, \pi \in N_n : \pi^{-1} \in \mathfrak{S}_{n}(m \cdots 21) \,\}$ and $W_n^{(m)}$; this is a generalization of Proposition \ref{lem:NW}.

Next, we can generalize Proposition \ref{lem:regexmain} to the fact that, for $w \in A^n$, we have $w \in W_n^{(m)}$  if and only if $w$ matches the regular expression
\begin{equation} a^* c\,{\left[ b^{\le m-3 } (c \cup bc \cup a^+ b \cup a^+ c )\right]}^*\,b^{\le m-3} a^+ c^*, \label{eqn:regexgeneral} \end{equation}
where $b^{\le t} \coloneqq (\varepsilon \cup b \cup b^2 \cup \cdots \cup b^t)$. Setting $m=3$ recovers the regular expression from Proposition \ref{lem:regexmain}. 

It can be shown that $x^{2}(x^{m-2}-1)/((1-x)^{2}(x^{m+1}-3x^{m}+3x-1))$ is the generating function for words matching the regular expression in \eqref{eqn:regexgeneral}; this generating function appeared in our first proof of Theorem \ref{t-ilpk}.

Unfortunately, the words matching the regular expression in \eqref{eqn:regexgeneral} do not have a nice interpretation in terms of tilings that would generalize the mapping from Section \ref{sec:ProofProp14}. We have found a few ways of turning these words into tilings of $2 \times k$ rectangles, but none are as natural-seeming as the one for $m=3$; more importantly, none can be broken down as a one-row tiling stacked on top of another one-row tiling, so these would not yield a simple product formula like $f_{k-1} f_k$ in the case of $m=3$. The best we can do is to use \eqref{eqn:regexgeneral} to derive a rational generating function, but there does not seem to be anything of great significance about this generating function or the associated sequence of numbers.
\\

\noindent \textbf{Acknowledgements.} The authors thank an anonymous referee for their suggestions on improving the presentation of this paper. The second author was partially supported by an AMS-Simons Travel Grant.

\bibliographystyle{plain}
\bibliography{bibliography}

\end{document}